\newtheorem{thm}{Theorem}[section]
\newtheorem{lem}[thm]{Lemma}
\newtheorem{prop}[thm]{Proposition}
\theoremstyle{definition}
\newtheorem{defn}[thm]{Definition}
\newtheorem{eks}[thm]{\sc Example}
\theoremstyle{remark}
\numberwithin{equation}{section}
\DeclareMathOperator{\err2}{\mathbb{R}^2}
\DeclareMathOperator{\clco}{\overline{conv}}
\definecolor{bittersweet}{rgb}{1.0, 0.44, 0.37}
\definecolor{byzantine}{rgb}{0.74, 0.2, 0.64}
\title{Daugavet- and Delta-points in absolute sums of Banach spaces}
\author{R. Haller, K. Pirk, and T. Veeorg}
\address{Institute of Mathematics and Statistics, University of Tartu, J.~Liivi 2, 50409 Tartu, Estonia}%
\email{\tiny rainis.haller@ut.ee, katriin.pirk@ut.ee,
triinu.veeorg@gmail.com}%
\keywords{Daugavet property, Daugavet-point, delta-point, absolute sum, diameter two property.}
\begin{document}
\begin{abstract}
    A Daugavet-point (resp.~$\Delta$-point) of a Banach space is a norm one element $x$ for which every point in the unit ball (resp.~element $x$ itself) is in the closed convex hull of unit ball elements that are almost at distance 2 from $x$. A Banach space has the well-known Daugavet property (resp.~diametral local diameter 2 property) if and only if every norm one element is a Daugavet-point (resp.~$\Delta$-point). This paper complements the article ``Delta- and Daugavet-points in Banach spaces'' by T. A. Abrahamsen, R. Haller, V. Lima, and K. Pirk, where the study of the existence of Daugavet- and $\Delta$-points in absolute sums of Banach spaces was started.
\end{abstract}
\subjclass[2010]{Primary 46B20, 46B04}%
\thanks{This work was supported by the Estonian Research Council grants IUT20-57 and PRG877. K.~Pirk was supported in part by University of Tartu Foundation's CWT Estonia travel scholarship.}

\maketitle

\section{Introduction}
Let $X$ be a real Banach space. We denote its closed unit ball by $B_X$, unit sphere by $S_X$, and dual space by $X^*$. 
Following \cite{AHLP} we say that 
\begin{itemize}
    \item[(1)] an $x$ in $S_X$ is a \emph{Daugavet-point} if $B_X \subset \clco \Delta_\varepsilon(x)$ for every $\varepsilon>0$,\smallskip
    \item[(2)] an $x$ in $S_X$ is a \emph{$\Delta$-point} if $x\in \clco\Delta_\varepsilon(x)$ for every $\varepsilon>0$, 
\end{itemize}
    where
$$\Delta_\varepsilon(x) = \{y\in B_X\colon \|x-y\|\geq 2-\varepsilon\}.$$
In the definition of Daugavet- and $\Delta$-point one can equivalently use the set $\{ y\in S_X\colon \|x-y\| \geq 2-\varepsilon\}$ instead of $\Delta_\varepsilon(x)$.

%
The concepts of Daugavet-point and $\Delta$-point originate from the observation in \cite{W} and \cite{BLR_diametral}, on one hand that a Banach space $X$ has the \emph{Daugavet property} (i.e. for every rank-1 bounded linear operator $T\colon X \to X$ we have $\|Id + T\|= 1 + \|T\|$), if and only if every $x\in S_X$ is a Daugavet-point (see \cite{W}); and on the other hand that a Banach space $X$ has the \emph{diametral local diameter 2 property} if and only if every $x\in S_X$ is a $\Delta$-point (see \cite{BLR_diametral}).

In this paper we clarify which absolute sums of Banach spaces have Daugavet- and $\Delta$-points. Recall that a norm $N$ on $\err2$ is called \emph{absolute} if $N(a,b) = N(|a|,|b|)$ for all $(a,b)\in \err2$, and \emph{normalised} if $N(1,0)=N(0,1)=1.$ Given two Banach spaces $X$ and $Y$, and an absolute normalised norm $N$ on $\err2$, we write $X\oplus_N Y$ to denote the direct sum $X\oplus Y$ with the norm $\|(x,y)\|_N = N(\|x\|,\|y\|)$ and we call this Banach space the \emph{absolute sum} of $X$ and $Y$.
Standard examples of absolute normalised norms are the $\ell_p$-norms on $\err2$ for every $p\in [1,\infty]$.

It is known that among all absolute sums only $\ell_1$-sum and $\ell_\infty$-sum can have Daugavet property (see \cite{BKSW}). In contrast, Daugavet- and $\Delta$-points may exist in various absolute sums. Some preliminary results, regarding this matter, were obtained in \cite{AHLP}, let us first recall two of them about Daugavet-points. 
\begin{prop}[see {\cite[Propositions 4.3 and 4.6]{AHLP}}]\label{POH to sum} 
Let $X$ and $Y$ be Banach spaces and $N$ an absolute normalised norm on $\err2$.
\begin{enumerate}
\item[{\rm (a)}] The absolute sum $X\oplus_N Y$ does not have any Daugavet-points, whenever $N$ has the following property:
\medskip
\begin{enumerate}
    \item[$(\alpha)$]{for all nonnegative reals $a$ and $b$ with $N(a,b)=1,$ there exist $\varepsilon>0$ and a neighbourhood $W$ of $(a,b)$ with \[\sup_{(c,d)\in W} c < 1\quad \text{or}\quad \sup_{(c,d)\in W} d < 1,\] such that $(c,d)\in W$ for all nonnegative reals $c$ and $d$ with \[N(c,d)=1\quad \text{and}\quad N\big((a,b)+(c,d)\big) \geq 2-\varepsilon.\]}
\end{enumerate}
\medskip
\item[{\rm (b)}] The absolute sum $X\oplus_N Y$ has Daugavet-points, whenever both $X$ and $Y$ have Daugavet-points, and $N$ has the following property:
\medskip
\begin{enumerate}
    \item[$(\beta)$]\label{beta}{there exist nonnegative reals $a$ and $b$ with $N(a,b)=1$ such that
    \[N\big( (a,b) + (0,1)\big) = 2\quad \text{and}\quad N\big( (a,b) + (1,0)\big)= 2.\]} 
\end{enumerate} 
\smallskip
More precisely, if $x$ and $y$ are Daugavet-points in $X$ and $Y$, respectively, and $(a,b)$ is from $(\beta)$, then $(ax,by)$ is a Daugavet-point in $X\oplus_N Y$.
\end{enumerate}
\end{prop}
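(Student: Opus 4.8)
The plan is to verify the slice criterion for Daugavet-points. Write $Z = X\oplus_N Y$ and $z = (ax,by)$; since $N(a,b)=1$ and $\|x\|=\|y\|=1$ we have $z\in S_Z$. By the Hahn--Banach separation theorem, the defining condition $B_Z\subseteq\clco\Delta_\varepsilon(z)$ for all $\varepsilon>0$ is equivalent to the following: for every $f\in S_{Z^*}$, every $\alpha>0$ and every $\varepsilon>0$, the slice $S(f,\alpha)=\{w\in B_Z\colon f(w)>1-\alpha\}$ meets $\Delta_\varepsilon(z)$. I will use that $Z^*=X^*\oplus_{N^*}Y^*$, where the dual norm $N^*$ is again absolute and normalised, so each $f\in S_{Z^*}$ is a pair $f=(f_1,f_2)$ with $N^*(\|f_1\|,\|f_2\|)=1$ and $f(u,v)=f_1(u)+f_2(v)$. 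Fix such an $f$ and numbers $\alpha,\varepsilon>0$.

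The key preliminary step is a geometric consequence of $(\beta)$: \emph{every point $(s,t)$ with $s,t\ge0$ and $N(s,t)=1$ satisfies $N\big((a,b)+(s,t)\big)=2$}. Indeed, $(\beta)$ says $N\big(\tfrac12((a,b)+(1,0))\big)=1$ and $N\big(\tfrac12((a,b)+(0,1))\big)=1$, so by convexity the segments $[(1,0),(a,b)]$ and $[(a,b),(0,1)]$ lie on the unit sphere. Since the positive part of the unit sphere is a convex arc from $(1,0)$ to $(0,1)$, it coincides with the union of these two segments, each having $(a,b)$ as an endpoint. Thus an arbitrary positive-sphere point $(s,t)$ lies on a sphere-segment with endpoint $(a,b)$, so the midpoint $\tfrac12\big((a,b)+(s,t)\big)$ lies on that same segment, hence on the sphere, giving $N\big((a,b)+(s,t)\big)=2$.

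Now I choose the weights. As $N$ is absolute and $N^*(\|f_1\|,\|f_2\|)=1$, the functional $(s,t)\mapsto s\|f_1\|+t\|f_2\|$ attains its maximum $1$ over $B_N$ at some $(s,t)$ with $s,t\ge0$ and $N(s,t)=1$; by the previous paragraph this point also satisfies $N\big((a,b)+(s,t)\big)=2$. Pick auxiliary parameters $0<\beta<\alpha$ and $0<\eta<\varepsilon/N(1,1)$. Whenever $s>0$ I use that $x$ is a Daugavet-point to choose $u\in S_X$ with $f_1(u)\ge(1-\beta)\|f_1\|$ and $\|x-u\|\ge2-\eta$ (any slice serves if $f_1=0$), and symmetrically $v\in S_Y$ whenever $t>0$. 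Put $w=(su,tv)$. Then $\|w\|=N(s,t)=1$, so $w\in B_Z$, and $f(w)=sf_1(u)+tf_2(v)\ge(1-\beta)\big(s\|f_1\|+t\|f_2\|\big)=1-\beta>1-\alpha$, so $w\in S(f,\alpha)$. An elementary triangle-inequality estimate gives $\|ax-su\|\ge a+s-\eta$ and $\|by-tv\|\ge b+t-\eta$, whence, using monotonicity of $N$ on the positive quadrant and the reverse triangle inequality,
\[
\|z-w\|=N\big(\|ax-su\|,\|by-tv\|\big)\ge N(a+s-\eta,b+t-\eta)\ge N(a+s,b+t)-\eta N(1,1)=2-\eta N(1,1)>2-\varepsilon .
\]
Hence $w\in S(f,\alpha)\cap\Delta_\varepsilon(z)$, and as $f,\alpha,\varepsilon$ were arbitrary, $z$ is a Daugavet-point.

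The main obstacle is the tension between the two requirements on $w$: to land in the slice $S(f,\alpha)$ the weights $(s,t)$ are forced to norm $(\|f_1\|,\|f_2\|)$, and a priori such weights need not satisfy $N((a,b)+(s,t))=2$, which is exactly what the distance estimate needs. The geometric observation of the second paragraph --- that $(\beta)$ collapses the whole positive unit sphere onto the two segments through $(a,b)$, so that every norming point is automatically aligned with $(a,b)$ --- is what removes this tension and constitutes the heart of the proof. The degenerate configurations ($f_1=0$, $f_2=0$, $s=0$ or $t=0$) require only routine bookkeeping and do not affect the argument.
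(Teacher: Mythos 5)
Your proposal proves only part (b) of the proposition; part (a) --- that $X\oplus_N Y$ has \emph{no} Daugavet-points whenever $N$ has property $(\alpha)$ --- is never addressed, and this is a genuine gap, not routine bookkeeping. Part (a) is an impossibility statement and needs a different kind of argument: given a candidate Daugavet-point $(x,y)$, one applies $(\alpha)$ to $(a,b)=(\|x\|,\|y\|)$ to get $\varepsilon>0$ and a neighbourhood $W$ with, say, $\sup_{(c,d)\in W}c<1$, and then shows that for $\varepsilon'$ small enough every $(u,v)\in\Delta_{\varepsilon'}\big((x,y)\big)$ has $N(\|u\|,\|v\|)\ge 1-\varepsilon'$ and $(\|u\|,\|v\|)/N(\|u\|,\|v\|)$ forced into $W$, whence $\|u\|\le\sup_{(c,d)\in W}c<1$; the same bound then passes to $\clco\Delta_{\varepsilon'}\big((x,y)\big)$, which therefore cannot contain points $(u',0)$ with $\|u'\|=1$, contradicting the Daugavet-point condition. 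Nothing of this sort appears in your write-up. (For what it is worth, the paper itself does not reprove this proposition either --- it quotes it from \cite{AHLP} --- but a blind proof of the stated result has to cover both halves.)

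Your part (b) is correct, and it follows essentially the same route as the paper's own generalisation of that part, Theorem \ref{A-OH to direct sum}: verify the slice criterion of Lemma \ref{Dp-crit}, choose weights $(s,t)$ on the positive unit sphere of $N$ that norm $(\|f_1\|,\|f_2\|)$, scale the points supplied by the Daugavet-point property of $x$ and $y$ by these weights, and close with the estimate $N(a+s-\eta,b+t-\eta)\ge N\big((a,b)+(s,t)\big)-\eta N(1,1)$. The one place you genuinely diverge is how the identity $N\big((a,b)+(s,t)\big)=2$ is secured for the chosen weights: the paper, working under the weaker $A$-octahedrality hypothesis, asserts the existence of one compatible pair $(k,l)$ by combining the $A$-OH property with the duality theory of absolute norms cited from \cite{H}, whereas you prove from $(\beta)$ alone that the positive part of the unit sphere equals $[(1,0),(a,b)]\cup[(a,b),(0,1)]$, so that \emph{every} norming pair is automatically compatible. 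That lemma is correct --- a midpoint of two norm-one points having norm one forces the whole segment onto the sphere, and every ray in the positive quadrant meets one of the two segments, hence meets the sphere there --- and it makes the $(\beta)$-case pleasantly self-contained; just be aware that it uses $(\beta)$ in full and would not survive the weakening to $A$-OH norms that the paper's Theorem \ref{A-OH to direct sum} is designed to handle.
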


It is easy to see that properties $(\alpha)$ and $(\beta)$ exclude each other. However, there are absolute normalised norms that have neither property $(\alpha)$ nor property $(\beta)$ (see discussion in \cite{AHLP}). This motivated us, in part, for further research to clarify the existence of Daugavet-points for all absolute sums (see Section~\ref{sec: from components to sum}).

%
%
%
%
Concerning $\Delta$-points, however, one can easily construct $\Delta$-points in any absolute sum $X\oplus_N Y$, given that both $X$ and $Y$ have $\Delta$-points (see discussion after Lemma 4.1 in \cite{AHLP}). In this paper, our particular aim is to describe the reverse situation, that is the existence of $\Delta$-points in the summands, by assuming that there are $\Delta$-points in the absolute sum.

Our main tools are the following two lemmas that use slices to describe Daugavet- and $\Delta$-points. These lemmas are easily derived from the Hahn--Banach separation theorem.
By a \emph{slice of the unit ball $B_X$} of $X$ we mean a set of the form
$$S(B_X, x^*,\alpha)= \{y\in B_X\colon x^*(y) > 1-\alpha\},$$
where $x^*\in S_{X^*}$ and $\alpha>0$.
\begin{lem}[{\cite[Lemma 2.2]{AHLP}}]\label{Dp-crit}
Let $X$ be a Banach space and $x\in S_X$. Then the following assertions are equivalent:
\begin{itemize}
    \item [{\rm (i)}]{$x$ is a Daugavet-point;}
    \item[{\rm (ii)}]{for every slice $S$ of $B_X$ and every $\varepsilon>0$ there exists $u\in S$ such that $\|x-u\|\ge 2-\varepsilon$.}
\end{itemize}
\end{lem}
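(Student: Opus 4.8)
The plan is to establish the equivalence at the level of a fixed $\varepsilon$: I would show that, for each $\varepsilon>0$ separately, the inclusion $B_X\subset\clco\Delta_\varepsilon(x)$ is equivalent to the statement that every slice of $B_X$ meets $\Delta_\varepsilon(x)$. Quantifying over $\varepsilon$ then yields the equivalence of (i) and (ii) directly, so the whole argument reduces to one geometric fact about closed convex sets and one application of Hahn--Banach separation.

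For (i)$\Rightarrow$(ii) a direct argument by contradiction suffices. Fix a slice $S=S(B_X,x^*,\alpha)$ and an $\varepsilon>0$, and suppose $S\cap\Delta_\varepsilon(x)=\emptyset$. Then every $y\in\Delta_\varepsilon(x)$ satisfies $x^*(y)\le 1-\alpha$, so $\Delta_\varepsilon(x)$ lies in the closed half-space $\{y:x^*(y)\le 1-\alpha\}$; being closed and convex, this half-space also contains $\clco\Delta_\varepsilon(x)$. On the other hand, since $x^*\in S_{X^*}$ there is $z\in B_X$ with $x^*(z)>1-\alpha$, and such a $z$ lies outside the half-space, hence outside $\clco\Delta_\varepsilon(x)$, contradicting $B_X\subset\clco\Delta_\varepsilon(x)$. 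Thus $S$ must meet $\Delta_\varepsilon(x)$, and any $u$ in the intersection is the required element with $\|x-u\|\ge 2-\varepsilon$.

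For (ii)$\Rightarrow$(i) the separation theorem does the work. Suppose (i) fails, so there exist $\varepsilon>0$ and $z\in B_X$ with $z\notin\clco\Delta_\varepsilon(x)$. Separating $z$ from the closed convex set $\clco\Delta_\varepsilon(x)$ produces a functional, which I normalise to $x^*\in S_{X^*}$, together with a scalar $\gamma$ satisfying $x^*(z)>\gamma\ge x^*(y)$ for all $y\in\Delta_\varepsilon(x)$. The step I would be most careful about — and the only genuine subtlety in an otherwise routine argument — is checking that this separating hyperplane really gives a slice of the prescribed form: since $z\in B_X$ forces $x^*(z)\le 1$, we get $\gamma<1$, so $\alpha:=1-\gamma>0$ and the slice $S(B_X,x^*,\alpha)=\{y\in B_X:x^*(y)>\gamma\}$ is well-defined, contains $z$ (hence is nonempty), yet is disjoint from $\Delta_\varepsilon(x)$. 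Consequently no $u\in S$ satisfies $\|x-u\|\ge 2-\varepsilon$, contradicting (ii). This closes the equivalence.
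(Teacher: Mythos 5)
Your proposal is correct and takes exactly the approach the paper intends: the paper gives no proof of its own (it cites \cite{AHLP} and merely notes that the lemma is easily derived from the Hahn--Banach separation theorem), and your argument is precisely that separation argument, with the per-$\varepsilon$ reduction and the check that the separating functional yields a genuine slice both handled correctly. The only point left implicit is that the separation step requires $\Delta_\varepsilon(x)\neq\emptyset$, which always holds since $-x\in\Delta_\varepsilon(x)$.
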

\begin{lem}[{\cite[Lemma 2.1]{AHLP}}]\label{Delta-points crit}
Let $X$ be a Banach space and $x\in S_X$. Then the following assertions are equivalent:
\begin{itemize}
    \item [{\rm (i)}]{$x$ is a $\Delta$-point;}
    \item[{\rm (ii)}]{for every slice $S$ of $B_X$, with $x\in S$, and every $\varepsilon>0$ there exists $u\in S$ such that $\|x-u\|\ge 2-\varepsilon$.}
\end{itemize}
\end{lem}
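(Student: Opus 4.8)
The plan is to use the reformulation that $x$ is a $\Delta$-point precisely when $x \in \clco \Delta_\varepsilon(x)$ for every $\varepsilon > 0$, and to convert membership (or non-membership) in this closed convex set into the language of slices by means of the Hahn--Banach separation theorem. I would treat the two implications separately.

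For (i) $\Rightarrow$ (ii) I would argue by contradiction. Fix a slice $S = S(B_X, x^*, \alpha)$ with $x \in S$ and some $\varepsilon > 0$, and suppose that no $u \in S$ satisfies $\|x - u\| \geq 2 - \varepsilon$; that is, $\Delta_\varepsilon(x) \cap S = \emptyset$. Then $\Delta_\varepsilon(x)$ is contained in the closed half-space $\{y \in X \colon x^*(y) \leq 1 - \alpha\}$, and since this half-space is closed and convex it also contains $\clco \Delta_\varepsilon(x)$. But $x \in \clco \Delta_\varepsilon(x)$ by (i), which forces $x^*(x) \leq 1 - \alpha$ and contradicts $x \in S$. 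Hence such a $u$ must exist.

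For (ii) $\Rightarrow$ (i) I would again argue by contradiction, assuming $x \notin \clco \Delta_\varepsilon(x)$ for some $\varepsilon > 0$. Since $\clco \Delta_\varepsilon(x)$ is closed and convex and $\{x\}$ is compact, the Hahn--Banach separation theorem yields $x^* \in S_{X^*}$ and $\gamma \in \mathbb{R}$ with $x^*(z) \leq \gamma < x^*(x)$ for every $z \in \clco \Delta_\varepsilon(x)$. The key step is then to manufacture an admissible slice from this functional: choosing $\alpha$ in the interval $(1 - x^*(x), \, 1 - \gamma]$, which is nonempty and positive because $x^*(x) \leq 1$ while $\gamma < x^*(x)$, produces a slice $S(B_X, x^*, \alpha)$ that contains $x$ (since $x^*(x) > 1 - \alpha$) yet is disjoint from $\Delta_\varepsilon(x)$ (since $x^*(y) \leq \gamma \leq 1 - \alpha$ there), contradicting (ii).

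The routine verifications---that the separating functional may be normalised to the unit sphere, and that the chosen $\alpha$ simultaneously meets $1 - \alpha < x^*(x)$ and $1 - \alpha \geq \gamma$---present no real difficulty. The only point requiring care, and the main (mild) obstacle, is the bookkeeping in the second implication: one must confirm that $\alpha > 0$ even in the boundary case $x^*(x) = 1$, and that the strict versus non-strict inequalities align so that $x$ falls inside the open slice while every element of $\Delta_\varepsilon(x)$ is excluded.
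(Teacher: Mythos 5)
Your proof is correct and follows exactly the route the paper indicates: the paper cites this lemma from \cite{AHLP} and notes only that it is ``easily derived from the Hahn--Banach separation theorem,'' which is precisely your argument. Both implications are handled soundly, including the bookkeeping ensuring $\alpha>0$ and the disjointness of the slice from $\Delta_\varepsilon(x)$.
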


In the following, we present our main results regarding both Daugavet- and $\Delta$-points. In Section \ref{sec: from components to sum}, we generalise part (b) of Proposition \ref{POH to sum} to all absolute normalised norms that do not have property ($\alpha$), which completes the research about the existence of Daugavet-points in the absolute sum, given that component spaces have Daugavet-points (see Theorem \ref{A-OH to direct sum}).

In Section \ref{sec: from sum to components}, we consider the existence of Daugavet-points in component spaces, assuming that the absolute sum has Daugavet-points. In this case, we prove that at least one component space has Daugavet-points (see Theorems~\ref{Daugavet-points to components N} and \ref{Daugavet-points to components infty}).

In Section \ref{sec: Delta-points}, we deal with the existence of $\Delta$-points in component spaces of an absolute sum with $\Delta$-points. For absolute normalised norms that differ from $\ell_\infty$-norm the results are similar to the case of Daugavet-points, i.e. at least one of component spaces has $\Delta$-points (see Theorem \ref{Delta-points on components}). However, the point $(x,y)$ with $\|x\|=\|y\|=1$ in the absolute sum equipped with $\ell_\infty$-norm can surprisingly be a $\Delta$-point even if neither $x$ nor $y$ is a $\Delta$-point in the respective component space (see Example \ref{Delta-point ex} and Proposition \ref{Unexpected Delta-points}). 
\section{Daugavet-points from summands to absolute sums}\label{sec: from components to sum}

In this section we focus on absolute sums inheriting Daugavet-points from component spaces. As pointed out in Introduction, the absolute sum with a norm $N$ satisfying property ($\alpha$), does not have any Daugavet-points (see  part (a) of Proposition \ref{POH to sum}). However, if both component spaces have Daugavet-points, then the absolute sum equipped with a norm $N$ satisfying property ($\beta$) also has Daugavet-points (see part (b) of Proposition \ref{POH to sum}). We complete this direction by specifying the situation for all the other absolute normalised norms besides the ones with properties ($\alpha$) or ($\beta$).

\begin{defn}\label{$A$-OH def}
Let $X$ be a Banach space and $A\subset S_X$. We say that the norm on $X$ is \emph{A-octahedral} ($A$-OH) if for every $x_1,\dots, x_n \in A$ and every $\varepsilon>0$ there exists $y\in S_X$ such that $\|x_i+y\|\geq 2-\varepsilon$ for every $i\in \{1,\dots,n\}.$
\end{defn}
It is evident that the octahedrality of a norm in its usual sense (see \cite{G} and \cite[Proposition 2.4]{HLP}) means that the norm is $S_X$-OH. 
We use this more general term, $A$-octahedrality to describe absolute sums, or more precisely absolute normalised norms, for which the absolute sums possess Daugavet-points.
The name, $A$-octahedrality, is justified by the fact that property $(\beta)$ has also been called positive octahedrality in \cite{HLN}.
Note that absolute normalised norm $N$ on $\err2$ with property $(\beta)$ is exactly $\{(0,1),(1,0)\}$-OH.
%

Consider an absolute normalised norm $N$ on $\err2$. We now define a specific set $A$ that is considered from here on until the end of Section \ref{sec: from sum to components}. Set
\begin{equation}\label{A-OH,cd}
    c=\max_{N(e,1)=1}e\quad \textnormal{and}\quad d=\max_{N(1,f)=1}f,\tag{$*$} 
\end{equation}
and define
$$A= \{(c,1),(1,d)\}.$$

Suppose that the norm $N$ is $A$-OH. By Definition \ref{$A$-OH def} there exists $(a,b)\in \err2$ with the following property:
\begin{equation}\tag{$**$}
\begin{split}
&a,b\geq 0,\quad N(a,b)=1,\quad \text{and}\\
&N\big((a,b)+(c,1)\big)=2\quad  \text{and}\quad  N\big((a,b)+(1,d)\big)=2.\label{A-OH, N=2}
\end{split}
\end{equation}

It is easy to see that, firstly, as mentioned above, the norms with property ($\beta$) are $A$-OH (for the specified $A$ as well), and secondly, $A$-OH norms do not have  property $(\alpha)$. Moreover, part (b) of Proposition \ref{POH to sum} can be extended for $A$-OH norms $N$ as well.
\begin{thm}\label{A-OH to direct sum}
Let $X$ and $Y$ be Banach spaces, $x\in S_X$, $y\in S_Y$, and let $N$ be an $A$-OH norm with $(a,b)$ as in $(**)$. If $x$ and $y$ are Daugavet-points in $X$ and $Y$, respectively, then $(ax,by)$ is a Daugavet-point in $X\oplus_N Y$.
\end{thm}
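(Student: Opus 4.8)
The plan is to use the slice criterion from Lemma~\ref{Dp-crit}: to show that $(ax,by)$ is a Daugavet-point in $X\oplus_N Y$, I must show that for every slice $S$ of $B_{X\oplus_N Y}$ and every $\varepsilon>0$ there is an element $u\in S$ with $\|(ax,by)-u\|_N\geq 2-\varepsilon$. A slice is determined by a norm-one functional on $(X\oplus_N Y)^*$; I would first describe this dual explicitly. The dual norm $N^*$ on $\mathbb{R}^2$ is again absolute and normalised, and $(X\oplus_N Y)^*$ is isometrically $X^*\oplus_{N^*} Y^*$, so a defining functional has the form $(x^*,y^*)$ with $N^*(\|x^*\|,\|y^*\|)=1$, acting by $(u,v)\mapsto x^*(u)+y^*(v)$. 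Thus a slice of $B_{X\oplus_N Y}$ is governed by the pair $(x^*,y^*)$ together with $\alpha>0$.

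Next I would split into cases according to which of $\|x^*\|,\|y^*\|$ is nonzero. Suppose first that $x^*\neq 0$; normalise $\bar{x}^*=x^*/\|x^*\|\in S_{X^*}$ and consider the slice $S(B_X,\bar{x}^*,\cdot)$ in $X$. Because $x$ is a Daugavet-point in $X$, Lemma~\ref{Dp-crit} gives, for any prescribed tolerance, an element $u\in S(B_X,\bar{x}^*,\cdot)$ with $\|x-u\|\geq 2-\varepsilon'$. The key idea is then to lift this witness to $X\oplus_N Y$ using the vector $(a,b)$ from $(**)$. Recall that $(**)$ guarantees $N\big((a,b)+(c,1)\big)=2$ and $N\big((a,b)+(1,d)\big)=2$, where $c,d$ come from $(*)$; this is precisely the positivity of the interaction that will force the distance in the sum to be nearly $2$. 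Concretely, I would build a candidate element of the form $(cu, -v)$ or $(u,dv)$ (with appropriate signs chosen so that the pairing with $(x^*,y^*)$ is large and so that it lands in the slice), where $u$ is the $X$-witness above and $v$ is a suitable norm-one element of $Y$ with $y^*(v)$ close to $\|y^*\|$. The role of $c=\max_{N(e,1)=1}e$ is to ensure $(cu,v)$ (or similar) sits inside $B_{X\oplus_N Y}$ while the $X$-coordinate is as far from $ax$ as possible.

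The distance computation is the heart of the argument: I must show $\|(ax,by)-(\text{witness})\|_N\geq 2-\varepsilon$. Here the first coordinate contributes roughly $\|ax+cu\|\approx\|a\cdot x\|+c\|u\|$-type behaviour via the Daugavet-point property of $x$ (the witness $u$ is almost antipodal to $x$), and then I apply the absolute norm $N$ to the pair of coordinate norms. The defining equation $N\big((a,b)+(c,1)\big)=2$ is exactly what converts these two nearly-extremal coordinate norms into a total norm close to $2$. The symmetric case $y^*\neq 0$ uses the other witness in $Y$ and the equation $N\big((a,b)+(1,d)\big)=2$. I expect the main obstacle to be the careful bookkeeping of the $\varepsilon$'s together with the estimates relating $\|u\|,\|x-u\|$ and the slice membership: one must verify simultaneously that the constructed element lies in the given slice (controlled by $\alpha$ and by how close $y^*(v)$ is to $\|y^*\|$) and that its distance to $(ax,by)$ is at least $2-\varepsilon$, and one must handle the degenerate case where the nonzero coordinate of $(x^*,y^*)$ forces $\|x^*\|$ or $\|y^*\|$ to equal $0$, which is where the maximality built into $(*)$ (the values $c$ and $d$) becomes essential to keep the construction inside the unit ball while preserving near-maximal distance.
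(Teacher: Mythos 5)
Your overall frame (the slice criterion of Lemma~\ref{Dp-crit} together with the identification $Z^*=X^*\oplus_{N^*}Y^*$) is the same as the paper's, but the construction of the witness has a genuine gap: in each of your cases you invoke the Daugavet property of only \emph{one} of the two points, pair its witness with a merely norming element of the other space, and scale by the \emph{fixed} constants $c$, $d$, or $1$ from $(*)$. Neither feature survives a general $A$-OH norm. Concretely, let $N$ be the norm whose positive unit sphere is the polygon through $(0,1)$, $(c,1)=(\tfrac12,1)$, $(a,b)=(\tfrac{9}{10},\tfrac{9}{10})$, $(1,d)=(1,\tfrac12)$, $(1,0)$ (this $N$ is $A$-OH with this $(a,b)$ satisfying $(**)$), and let $f=(x^*,y^*)$ with $\|x^*\|=\|y^*\|=\tfrac59$, so that $N^*(\tfrac59,\tfrac59)=1$ and the corresponding functional on $\mathbb{R}^2$ exposes the vertex $(a,b)$. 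Then $f(cu,v)\le \tfrac12\cdot\tfrac59+\tfrac59=\tfrac56$ and $f(u,dv)\le\tfrac56$, so for small $\alpha$ your candidates are not even in the slice $S(B_Z,f,\alpha)$; moreover, every element of that slice has both coordinate norms close to $\tfrac{9}{10}$, and for its distance to $(ax,by)$ to approach $2$ \emph{both} coordinates must be nearly antipodal to $ax$ and to $by$ respectively --- so the Daugavet witnesses in $X$ and in $Y$ are needed simultaneously, and a norming $v$ (which could simply be $y$) cannot do the job. (The sign trick $(cu,-v)$ only hurts, since it decreases the pairing with $f$.)

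What is missing is exactly the paper's key step: take Daugavet witnesses $u\in B_X$ and $v\in B_Y$ for \emph{both} points, with $x^*(u)\ge(1-\tfrac{\alpha}{2})\|x^*\|$ and $y^*(v)\ge(1-\tfrac{\alpha}{2})\|y^*\|$, and choose the scaling pair $(k,l)$ \emph{adapted to the functional}, namely $k,l\ge0$ with
\[
N(k,l)=1,\qquad N\big((a,b)+(k,l)\big)=2,\qquad k\|x^*\|+l\|y^*\|=1 .
\]
The existence of such $(k,l)$ (from the structure of absolute norms together with $A$-octahedrality: the exposed face of $(\|x^*\|,\|y^*\|)$ always meets the arc of the sphere between $(c,1)$ and $(1,d)$ passing through $(a,b)$) is the crux of the argument, and your fixed pairs $(c,1)$, $(1,d)$ are only the two extreme instances of it, sufficient precisely when the functional attains its norm at one of them. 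With this $(k,l)$, the element $(ku,lv)$ lies in the slice, since $kx^*(u)+ly^*(v)\ge(1-\tfrac{\alpha}{2})(k\|x^*\|+l\|y^*\|)>1-\alpha$, and satisfies
\[
\|(ax,by)-(ku,lv)\|_N\ge N(a+k-\delta,\,b+l-\delta)\ge N\big((a,b)+(k,l)\big)-\delta N(1,1)>2-\varepsilon ,
\]
using $\|ax-ku\|\ge a+k-\delta$ and $\|by-lv\|\ge b+l-\delta$. Without the adapted pair $(k,l)$ and without both witnesses, the argument cannot be completed.
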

\begin{proof}
Assume that $x$ and $y$ are Daugavet-points. Set $Z=X\oplus_NY$ and
fix $f=(x^*,y^*)\in S_{Z^*}$, $\alpha>0$, and $\varepsilon>0$. Choose $\delta>0$ to satisfy $\delta N(1,1)<\varepsilon$. By Lemma \ref{Dp-crit} we obtain $u\in B_X$ and $v\in B_Y$ such that
$$x^*(u)\ge\Big(1-\frac{\alpha}{2}\Big)\|x^*\|\quad \textnormal{and}\quad y^*(v)\ge\Big(1-\frac{\alpha}{2}\Big)\|y^*\|$$
and
$$\|x-u\|\ge 2-\delta\quad \textnormal{and}\quad \|y-v\|\ge 2-\delta.$$ 

By the properties of absolute normalised norms (see \cite{H}, p 317) and $A$-OH norms, there exist $k,l\ge 0$ such that
$$N(k,l)=1,\quad N\big((a,b)+(k,l)\big)=2,\quad\textnormal{and}\quad k\|x^*\| + l\|y^*\|=1.$$
Therefore $(ku,lv)\in S(B_Z,f,\alpha)$, because
$$f(ku,lv) = kx^*(u)+ly^*(v) \ge\Big(1-\frac{\alpha}{2}\Big)(k\|x^*\|+l\|y^*\|)>1-\alpha.$$
On the other hand, from $\|x-u\|\geq 2-\delta$ and $\|y-v\| \geq 2-\delta$, we get that 
$$\big\Vert ax-ku\big\Vert \geq a+k - \delta\quad
\text{and}\quad
\big\Vert by-lv\big\Vert \geq b+l-\delta.$$
In conclusion we have that
\begin{align*}
\|(ax,by)-(ku,lv)\|_N&=N(\|ax-ku\|,\|by-lv\|)\\
&\ge N(a+k-\delta, b+l-\delta)\\
&\ge N(a+k,b+l)-N(\delta, \delta)\\
&= N\big((a,b)+(k,l)\big)-\delta N(1,1)\\
&> 2-\varepsilon,
\end{align*}
which means that $(ku,lv)$ satisfies the necessary conditions such that according to Lemma \ref{Dp-crit} $(ax,by)$ is a Daugavet-point.
\end{proof}
In order to have Daugavet-points in the absolute sum it is enough, for some $A$-OH norms, to assume that only one summand has a Daugavet-point. The following two propositions describe these special occasions, we drop the proofs since they are similar to the previous one.
\begin{prop}\label{(x,0)and(0,y)Daugavet-points}
Let $X$ and $Y$ be Banach spaces, $x\in S_X$ and $y\in S_Y$, and let $N$ be an $A$-OH norm with $(a,b)$ as in $(**)$.
\begin{enumerate}
    \item[{\rm(a)}] If $b=0$ and $x$ is a Daugavet-point in $X$, then $(ax,by)=(x,0)$ is a Daugavet-point in $X\oplus_NY$.
    \item[{\rm(b)}]  If $a=0$ and $y$ is a Daugavet-point in $Y$, then $(ax,by)=(0,y)$ is a Daugavet-point in $X\oplus_NY$.
\end{enumerate}
\end{prop}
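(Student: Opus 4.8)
The plan is to imitate the proof of Theorem~\ref{A-OH to direct sum} almost verbatim; the single new idea is that in the summand \emph{without} a Daugavet-point the far-away point is replaced by a mere norming element. By the obvious symmetry it suffices to prove~(a), so I assume $b=0$ and that $x$ is a Daugavet-point in $X$. Since $N$ is absolute and normalised, $N(t,0)=t$ for $t\ge0$, so $N(a,b)=N(a,0)=1$ forces $a=1$ and hence $(ax,by)=(x,0)$. Writing $Z=X\oplus_N Y$, I would fix $f=(x^*,y^*)\in S_{Z^*}$ and $\alpha,\varepsilon>0$, choose $\delta$ with $0<\delta<\varepsilon$, and try to produce a point of the slice $S(B_Z,f,\alpha)$ lying at distance at least $2-\varepsilon$ from $(x,0)$; by Lemma~\ref{Dp-crit} this establishes that $(x,0)$ is a Daugavet-point.

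The next step is to assemble three ingredients. First, since $x$ is a Daugavet-point, Lemma~\ref{Dp-crit} supplies $u\in B_X$ with $x^*(u)\ge(1-\tfrac{\alpha}{2})\|x^*\|$ and $\|x-u\|\ge 2-\delta$. Second --- and here the hypothesis on $Y$ is weakened --- I only need a norming vector for $y^*$, namely some $v\in S_Y$ with $y^*(v)\ge(1-\tfrac{\alpha}{2})\|y^*\|$, which exists by the very definition of $\|y^*\|$ (any $v\in S_Y$ works when $\|y^*\|=0$). Third, exactly as in Theorem~\ref{A-OH to direct sum}, the duality between $N$ and its dual norm yields $k,l\ge0$ with $N(k,l)=1$ and $k\|x^*\|+l\|y^*\|=N^*(\|x^*\|,\|y^*\|)=1$; since the functional $(k,l)\mapsto k\|x^*\|+l\|y^*\|$ has nonnegative coefficients, its maximiser over the unit sphere of $N$ can be taken on the ``upper-right'' face $[(c,1),(1,d)]$, on which $(**)$ with $(a,b)=(1,0)$ gives $N\big((1,0)+(k,l)\big)=2$. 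Thus one and the same pair $(k,l)$ enjoys all three properties.

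It then remains to verify the two requirements for the test point $(ku,lv)$. Membership in the slice is routine: $\|(ku,lv)\|_N\le N(k,l)=1$ and $f(ku,lv)=k\,x^*(u)+l\,y^*(v)\ge(1-\tfrac{\alpha}{2})\big(k\|x^*\|+l\|y^*\|\big)>1-\alpha$. For the distance, note $k\le1$ and $\|u\|\le1$ give $\|x-ku\|\ge\|x-u\|-(1-k)\ge 1+k-\delta$, while $\|0-lv\|=l$ because $v\in S_Y$; hence
\begin{align*}
\big\|(x,0)-(ku,lv)\big\|_N
&= N\big(\|x-ku\|,\,l\big)\ge N(1+k-\delta,\,l)\\
&\ge N(1+k,l)-\delta = N\big((1,0)+(k,l)\big)-\delta = 2-\delta > 2-\varepsilon,
\end{align*}
which is precisely what Lemma~\ref{Dp-crit} asks for. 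The only genuinely nontrivial step is the simultaneous existence of $(k,l)$, that is, that a norming pair for $(\|x^*\|,\|y^*\|)$ can be selected on the face singled out by $(**)$; this is the same geometric fact about absolute normalised norms already used in Theorem~\ref{A-OH to direct sum}. Everything else reduces to the estimate above, and the one new observation is that $b=0$ lets the elementary identity $\|0-lv\|=l$ furnish the second coordinate, so a Daugavet-point in $Y$ is not required.
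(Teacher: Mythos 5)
Your proof is correct and follows essentially the same route as the paper, which omits the proof of Proposition~\ref{(x,0)and(0,y)Daugavet-points} precisely because it is ``similar to the previous one'': you adapt the proof of Theorem~\ref{A-OH to direct sum} with $(a,b)=(1,0)$, keeping the slice argument, the choice of $(k,l)$ with $N(k,l)=1$, $k\|x^*\|+l\|y^*\|=1$ and $N\big((1,0)+(k,l)\big)=2$ (the same fact the paper invokes via \cite{H} and $A$-octahedrality), and the estimate $N(1+k-\delta,l)\ge 2-\delta$. Your one modification --- replacing the far-away slice element $v$ in $Y$ by a norming element $v\in S_Y$, so that $\|0-lv\|=l$ without any hypothesis on $y$ --- is exactly the adjustment the omitted proof requires.
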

\begin{prop}\label{(x,by)Daugavet-point in infty-sum}
Let $X$ and $Y$ be Banach spaces, $x\in S_X$ and $y\in S_Y$. \begin{enumerate}
    \item[{\rm(a)}] If $x$ is a Daugavet-point in $X$, then $(x,by)$ is a Daugavet-point in $X\oplus_{\infty}Y$ for every $b\in [0,1]$. 
    \item[{\rm(b)}] If $y$ is a Daugavet-point in $Y$, then $(ax,y)$ is a Daugavet-point in $X\oplus_{\infty}Y$ for every $a\in [0,1]$.
\end{enumerate}
\end{prop}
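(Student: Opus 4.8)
The plan is to prove (a) via the slice criterion of Lemma~\ref{Dp-crit}, following the template of the proof of Theorem~\ref{A-OH to direct sum}; part (b) then follows by the symmetric argument with the roles of $X$ and $Y$ interchanged. First I would record the two structural facts about $Z:=X\oplus_\infty Y$ that drive everything: its dual is the $\ell_1$-sum $X^*\oplus_1 Y^*$, so a functional $f=(x^*,y^*)\in S_{Z^*}$ satisfies $\|x^*\|+\|y^*\|=1$; and its norm is the maximum $\|(u,v)\|_\infty=\max\{\|u\|,\|v\|\}$. Since $b\in[0,1]$ and $\|x\|=\|y\|=1$, the point $(x,by)$ has norm $\max\{1,b\}=1$, hence lies in $S_Z$ as required.

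Next, fixing an arbitrary slice $S(B_Z,f,\alpha)$ with $f=(x^*,y^*)\in S_{Z^*}$ and an $\varepsilon>0$, I would choose a companion $\delta\in(0,\varepsilon]$ and produce a point $(u,v)\in S(B_Z,f,\alpha)$ almost at distance $2$ from $(x,by)$. The element $u\in B_X$ comes from the Daugavet-point property of $x$: applying Lemma~\ref{Dp-crit} to the slice $S(B_X,x^*/\|x^*\|,\alpha/2)$ (valid when $\|x^*\|>0$) yields $u$ with $x^*(u)\ge(1-\alpha/2)\|x^*\|$ and, crucially, $\|x-u\|\ge 2-\delta$. The element $v\in B_Y$ is then chosen merely to nearly norm $y^*$, i.e.\ $y^*(v)\ge(1-\alpha/2)\|y^*\|$, which exists by definition of the dual norm. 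Adding these, $f(u,v)=x^*(u)+y^*(v)\ge(1-\alpha/2)(\|x^*\|+\|y^*\|)=1-\alpha/2>1-\alpha$, so $(u,v)\in S(B_Z,f,\alpha)$.

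The decisive observation is that the $\ell_\infty$-norm lets the first coordinate alone realise the distance:
$$\|(x,by)-(u,v)\|_\infty=\max\{\|x-u\|,\|by-v\|\}\ge\|x-u\|\ge 2-\delta\ge 2-\varepsilon.$$
Thus $(u,v)$ witnesses condition (ii) of Lemma~\ref{Dp-crit}, and $(x,by)$ is a Daugavet-point. This is precisely why no hypothesis on $Y$ and no restriction on $b\in[0,1]$ are needed: the second coordinate $v$ is used only to feed the slice functional $y^*$, never to push the distance up to $2$.

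I do not expect a genuine obstacle, but the one point requiring care is the degenerate case $\|x^*\|=0$ (so that $f$ sees only the $Y$-coordinate and the slice $S(B_X,x^*/\|x^*\|,\cdot)$ is undefined). There I would not invoke the slice form of the Daugavet-point property for $x$; instead I would use only that $\Delta_\delta(x)\neq\emptyset$ --- which holds since $B_X\subset\clco\Delta_\delta(x)$ forces $\Delta_\delta(x)$ to be nonempty --- to pick any $u\in B_X$ with $\|x-u\|\ge 2-\delta$, while choosing $v\in B_Y$ with $y^*(v)>1-\alpha=\|y^*\|-\alpha$. The same two displayed estimates then go through verbatim, completing the argument in all cases.
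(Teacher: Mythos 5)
Your proof is correct and follows exactly the route the paper intends: the paper omits the proof of Proposition~\ref{(x,by)Daugavet-point in infty-sum}, stating it is similar to that of Theorem~\ref{A-OH to direct sum}, and your argument is precisely that adaptation --- slice criterion via Lemma~\ref{Dp-crit}, a Daugavet-point witness $u$ in the $X$-coordinate, a merely norming $v$ in the $Y$-coordinate, and the $\ell_\infty$-maximum letting $\|x-u\|$ alone push the distance to $2-\varepsilon$. Your explicit treatment of the degenerate case $\|x^*\|=0$ is a point of care the paper's template glosses over, but it does not change the approach.
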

 With this we have widened the class of absolute sums inheriting Daugavet-points from the component spaces (see also the discussion after Remark 5.9 in \cite{AHLP}). At this point, it is still not clear, though, whether all absolute normalised norms are covered, since there could be other absolute normalised norms 'in between' the norms with property ($\alpha$) and $A$-OH norms. We will prove now that actually this is not the case, all absolute normalised norms are either with property ($\alpha$) or $A$-OH.

It is not hard to see that in the definition of property ($\alpha$), we can relax the condition $\varepsilon >0$ and let $\varepsilon=0$. The corresponding reformulation of property ($\alpha$) is the following:
\begin{enumerate}
    \item[$(\alpha)$]{for all nonnegative reals $a$ and $b$ with $N(a,b)=1,$ there exists a neighbourhood $W$ of $(a,b)$ with \[\sup_{(c,d)\in W} c < 1\quad \text{or}\quad \sup_{(c,d)\in W} d < 1,\] such that $(c,d)\in W$ for all nonnegative reals $c$ and $d$ with \[N(c,d)=1\quad \text{and}\quad N\big((a,b)+(c,d)\big) = 2.\]}
\end{enumerate}
\begin{prop}\label{A-OH eqv neg beta}
Every absolute normalised norm on $\mathbb R^2$ is either with property $(\alpha)$ or $A$-OH. 
\end{prop}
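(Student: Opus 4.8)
The plan is to prove the only non-trivial implication, namely $\lnot(\alpha)\Rightarrow A\text{-OH}$, which is equivalent to the stated dichotomy ``$(\alpha)$ or $A$-OH''; the reverse implication is exactly the already-noted fact that $A$-OH norms fail $(\alpha)$. Throughout I work in the first quadrant and rely on two standard facts about an absolute normalised norm $N$: it is coordinatewise monotone on $[0,\infty)^2$ (see \cite{H}), so the first–quadrant part of its unit sphere $S$ lies in $[0,1]^2$ and $c,d\in[0,1]$; and for unit vectors $u,v$ one has $N(u+v)=2$ if and only if the segment $[u,v]$ is contained in $S$ (the midpoint then lies on $S$, and convexity forces the whole segment onto $S$). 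I first record the two ``extremal'' pieces of $S$: the sets $R=\{(1,t)\colon 0\le t\le d\}$ and $T=\{(s,1)\colon 0\le s\le c\}$ are exactly the first–quadrant points of $S$ whose first, resp.\ second, coordinate equals $1$; each is the first–quadrant half of a maximal segment of $S$.

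Next I reformulate both properties. For a first–quadrant unit vector $p$ write $F_p=\{q\colon q\ge 0,\ N(q)=1,\ N(p+q)=2\}$. The $\varepsilon=0$ form of $(\alpha)$ asks that $\{p\}\cup F_p$ be enclosed in an open neighbourhood $W$ of $p$ with $\sup_W s<1$ or $\sup_W t<1$. Since $\{p\}\cup F_p$ is compact and lies in $[0,1]^2$, for a fixed $p$ this is possible precisely when $\{p\}\cup F_p$ avoids $R$ or avoids $T$; hence $N$ fails $(\alpha)$ if and only if some first–quadrant unit $p$ has $\{p\}\cup F_p$ meeting both $R$ and $T$. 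On the other side, $N$ is $A$-OH as soon as there is a single first–quadrant unit vector $p^\ast$ with $N(p^\ast+(c,1))=2$ and $N(p^\ast+(1,d))=2$: such a $p^\ast$ serves as the witness $y$ for every $\varepsilon>0$ in Definition~\ref{$A$-OH def}, and it is the point $(a,b)$ of $(**)$.

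The heart of the matter is the observation that if $\{p\}\cup F_p$ meets $R$, then already $N(p+(1,d))=2$ (and symmetrically $T$ yields $N(p+(c,1))=2$). Indeed, meeting $R$ produces $r\in R$ with $[p,r]\subseteq S$. If $r=(1,d)$ we are done. Otherwise $r$ is a relative interior point of the maximal vertical segment $\{(1,t)\colon |t|\le d\}$ of $S$, so it lies on a unique segment of $S$; the flat segment $[p,r]$ is therefore contained in that vertical segment, forcing $p\in R$ and hence $N(p+(1,d))=2$ since $[p,(1,d)]\subseteq R\subseteq S$. (When $d=0$ the set $R$ degenerates to $\{(1,0)\}=\{(1,d)\}$ and the conclusion is immediate; the case of $T$ is identical with $c$ in place of $d$.) Plugging this into the reformulation of $\lnot(\alpha)$, the witness $p$ satisfies both $N(p+(c,1))=2$ and $N(p+(1,d))=2$, so $p^\ast=p$ makes $N$ $A$-OH, which completes the proof.

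The step I expect to be delicate is exactly this geometric observation, because the relation ``$u,v$ are joined by a segment of $S$'' is \emph{not} transitive: one cannot simply chain $p\sim r\sim(1,d)$. The resolution is the uniqueness of the face through a relative interior point, together with careful bookkeeping of the vertices $(1,d)$ and $(c,1)$ and of the degenerate cases $c=0$ or $d=0$. Notably the argument never uses $c<1$ or $d<1$, so the boundary case $c=d=1$ is absorbed automatically.
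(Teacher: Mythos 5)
Your proof is correct, and it takes a genuinely different route from the paper's. The paper also proves the contrapositive $\lnot(\alpha)\Rightarrow A$-OH, but by a sequential limit argument: for a witness $(a,b)$ of the failure of $(\alpha)$ with $a\neq 1$, it tests the neighbourhoods $W_n=\{(x,y)\colon x\le 1-1/n\}$, obtains witnesses $(c_n,d_n)\notin W_n$ with $N(c_n,d_n)=1$ and $N\big((a,b)+(c_n,d_n)\big)=2$, and passes to a convergent subsequence $(c_n,d_n)\to(1,d')$, so that continuity gives $N(1,d')=1$ and $N\big((a,b)+(1,d')\big)=2$; the case $a=1$ (your case $p\in R$) is handled directly. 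Your compactness-based equivalence ``$(\alpha)$ holds at $p$ iff $\{p\}\cup F_p$ avoids $R$ or avoids $T$'' repackages that same compactness, but your treatment of the heart of the proof via the face structure of the unit ball is genuinely different, and it has one advantage: it lands exactly on the extreme points $(c,1)$ and $(1,d)$ of $(*)$, whereas the paper's limit $(1,d')$ need not have $d'=d$, so the paper implicitly still needs a monotonicity step to reach the fixed set $A$. That said, the step you single out as delicate --- upgrading a point $r\in(\{p\}\cup F_p)\cap R$ to $N\big(p+(1,d)\big)=2$ via uniqueness of the maximal face through a relative interior point --- is far heavier machinery than needed: coordinatewise monotonicity of absolute norms does it in one line. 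Indeed, if $q=(1,t)\in F_p$ (so $0\le t\le d$ by maximality of $d$), then
\[
2=N(p+q)\le N\big(p+(1,d)\big)\le N(p)+N(1,d)=2,
\]
and if instead $p=(1,t)\in R$, then likewise $2=N(2,0)\le N\big(p+(1,d)\big)\le 2$. This bypasses the vertex versus relative-interior case split and the degenerate cases $c=0$ or $d=0$ altogether. Your face argument is nonetheless sound, modulo one imprecision in wording: ``$r$ lies on a unique segment of $S$'' is literally false (it lies on many subsegments) and should read ``every segment of $S$ containing $r$ is contained in the unique maximal face having $r$ in its relative interior,'' which is the fact your resolution actually uses.
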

\begin{proof}
Let $N$ be an absolute normalised norm that does not have property $(\alpha)$. Then there exist $a,b\geq 0$ with $N(a,b)=1$ such that for every $(a,b)$-neighbourhood $W$ which satisfies either $\sup_{(c,d)\in W} c < 1$ or $\sup_{(c,d)\in W} d < 1,$ there exist $c,d\geq 0$ with $N(c,d)=1$ such that $(c,d)\notin W$ and $N\big((a,b)+(c,d)\big) = 2.$
To show that $N$ is $A$-OH, we need to find $c,d\geq 0$ satisfying
$$N(c,1)=N(1,d)=1\quad \textnormal{and}\quad N\big((a,b)+(c,1)\big)=N\big((a,b)+(1,d)\big)=2.$$

If $a=1$ ($b=1$, respectively), then take $d=b$ ($c=a$, respectively). However, if $a\neq 1$, then for every $n\in \mathbb{N}$ large enough $(a < 1-1/n),$ by taking $W_n = \{(x,y)\colon x\leq 1-1/n\},$ we can find $c_n,d_n\geq 0$ with $N(c_n,d_n)=1$ such that $(c_n,d_n)\notin W_n$ and $N\big((a,b)+(c_n,d_n)\big) = 2.$ Passing to a subsequence if necessary, we can assume that $(c_{n},d_{n})\to (1,d)$ for some $d\geq 0$. Obviously $N(1,d)=1$ and $N\big((a,b) +(1,d)\big) = 2.$ It can be proved similarly that if $b\neq 1$, then there exists $c\geq 0$ with $N(c,1)=1$ and $N\big((a,b) +(c,1)\big) = 2.$ Combining these facts we have that $N$ is $A$-OH.
\end{proof}
\section{Daugavet-points from absolute sums to summands}\label{sec: from sum to components}
In \cite{AHLP} the authors stated a question about the existence of Daugavet-points in Banach spaces $X$ and $Y$, given that the absolute sum $X\oplus_N Y$ has Daugavet-points (see \cite[Problem 1]{AHLP}). In this section we give an answer to that question. Recall that this question is open only for $A$-OH norms $N$, whereas for norms $N$ with property $(\alpha)$ we do not have any Daugavet points in $X\oplus_N Y$.

\begin{thm}\label{Daugavet-points to components N}
Let $X$ and $Y$ be Banach spaces, $x\in B_X$, $y\in B_Y$, and let $N$ be an absolute normalised norm on $\mathbb R^2$, different from $\ell_\infty$-norm. Assume that $(x,y)$ is a Daugavet-point in $X\oplus_N Y$.
\begin{enumerate}
    \item[{\rm(a)}] If $x\not =0$, then $x/\|x\|$ is a Daugavet-point in $X$.
    \item[{\rm(b)}] If $y\not =0$, then $y/\|y\|$ is a Daugavet-point in $Y$.
\end{enumerate}
\end{thm}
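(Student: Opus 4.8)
The plan is to reduce to part (a) and prove it via the slice criterion of Lemma~\ref{Dp-crit}. Part (b) follows from (a) by symmetry: interchanging the roles of $X$ and $Y$ replaces $N$ by $\tilde N(a,b)=N(b,a)$, which is again absolute, normalised and different from the $\ell_\infty$-norm (the latter being symmetric), while $(y,x)$ is a Daugavet-point in $Y\oplus_{\tilde N}X$ precisely when $(x,y)$ is one in $X\oplus_N Y$. So I assume $x\neq 0$, write $s=\|x\|$, $t=\|y\|$ and $x_0=x/s\in S_X$; note $N(s,t)=1$ and $s\le 1$. To prove $x_0$ is a Daugavet-point I fix a slice $S(B_X,x^*,\alpha)$ and $\varepsilon>0$, and I look for $u\in S(B_X,x^*,\alpha)$ with $\|x_0-u\|\ge 2-\varepsilon$.

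The mechanism is to lift the slice to $Z=X\oplus_N Y$. Since the dual norm $N^*$ is again normalised, $f=(x^*,0)\in S_{Z^*}$, and I apply Lemma~\ref{Dp-crit} to the Daugavet-point $(x,y)$ with the slice $S(B_Z,f,\gamma)$ and a tolerance $\eta$, both to be chosen small. This produces $(u',v')\in B_Z$ with $x^*(u')>1-\gamma$ and $N(\|x-u'\|,\|y-v'\|)\ge 2-\eta$. Writing $m=\|u'\|$, the slice forces $1-\gamma<m\le 1$, so $u:=u'/m\in S_X$ satisfies $x^*(u)\ge x^*(u')>1-\gamma\ge 1-\alpha$ once $\gamma\le\alpha$; thus $u\in S(B_X,x^*,\alpha)$. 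Since $u'=mu$, the elementary estimate $s\|x_0-u\|=\|x-su\|\ge\|x-u'\|-|s-m|$ shows that everything comes down to forcing $\|x-u'\|$ close to its maximal possible value $s+m$ (equivalently, to $s+1$, as $m\to 1$): then $\|x_0-u\|\ge\frac1s\big((s+1)-(1-s)\big)=2$ up to errors that vanish with $\gamma,\eta$.

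The crux, and the place where the hypothesis $N\neq\ell_\infty$ is essential, is exactly this: proving $\|x-u'\|\to s+1$. The danger is that the inequality $N(\|x-u'\|,\|y-v'\|)\ge 2-\eta$ is \emph{realised through the second coordinate}, i.e. that $v'$ is nearly antipodal to $y$ while $u'$ is not nearly antipodal to $x$; this would keep $\|x_0-u\|$ bounded away from $2$. I rule this out in two steps. First I control the second coordinate: from $N(m,\|v'\|)\le 1$ and $m>1-\gamma$ one gets $\|v'\|\le d_\gamma$, where $d_\gamma\downarrow d:=\max\{f\ge 0:N(1,f)=1\}$ as $\gamma\to 0$; since $N\neq\ell_\infty$ one has $d<1$, so $\|y-v'\|\le t+\|v'\|\le t+d_\gamma$ with $t+d<2$. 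Second, I use a convexity fact about absolute norms: for any height $\beta<1$ the unit sphere contains no horizontal segment at that height. Indeed $g_\beta(a):=N(a,\beta)$ is convex and non-decreasing with $g_\beta(0)=\beta<1$, so it cannot be constant equal to $1$ on a nondegenerate interval; in particular $g_\beta(a_1)<1$ whenever $a_1<a_2$ and $g_\beta(a_2)=1$. Applying this at height $\beta=(t+d)/2<1$ gives $N(s+1-\delta_0,\,t+d)<2$ for every fixed $\delta_0>0$ (if $N(s+1,t+d)<2$ this is immediate by monotonicity, and if $N(s+1,t+d)=2$ then the two points $(s+1-\delta_0,t+d)$ and $(s+1,t+d)$ would otherwise form a forbidden horizontal segment of the doubled sphere). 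Consequently, were $\|x-u'\|\le s+1-\delta_0$, monotonicity would force $N(\|x-u'\|,\|y-v'\|)\le N(s+1-\delta_0,t+d_\gamma)$, which tends to $N(s+1-\delta_0,t+d)<2$ as $\gamma\to 0$ and hence is $<2-\eta$ for small $\gamma,\eta$, contradicting the far-point inequality. Therefore $\|x-u'\|>s+1-\delta_0$, and choosing $\delta_0$ (and then $\gamma,\eta$) small enough in terms of $s$ and $\varepsilon$ yields $\|x_0-u\|\ge 2-\varepsilon$, verifying Lemma~\ref{Dp-crit}(ii).

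I expect the main obstacle to be precisely this second step: organising the two-dimensional estimate so that a deficiency of $\|x-u'\|$ below $s+m$ is converted into a definite drop of the value of $N$ below $2$. The subtlety is that $N$ may genuinely have flat segments, so one cannot argue by strict convexity alone; the point is that the only flat segment that could spoil the argument sits at height $1$, which is excluded once the second coordinate has been pinned below $t+d<2$ using $d<1$, that is, using $N\neq\ell_\infty$.
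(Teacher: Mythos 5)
Your proposal is correct and follows essentially the same route as the paper's proof: lift the slice to $Z=X\oplus_N Y$ via the functional $f=(x^*,0)$, use $N\neq\ell_\infty$ (through $d<1$, which is equivalent to $N(1,1)>1$) to pin the $Y$-coordinate of the far point strictly below $2$, and then force $\|x-u'\|$ to be nearly maximal so that a triangle-inequality computation finishes the argument. The only real difference is one of packaging: the paper encodes the crucial two-dimensional step as the choice of a uniform $\delta$ with a narrow-fiber property (whose existence it asserts without proof), whereas you prove the needed fact explicitly --- convexity of $a\mapsto N(a,\beta)$ with $\beta<1$ rules out horizontal segments of the level set $\{N=2\}$ below height $2$, combined with $d_\gamma\downarrow d$ --- so your write-up is self-contained at exactly the point the paper leaves to the reader.
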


\begin{proof} 
We prove only the first statement, the second can be proved similarly.  Suppose that $x\not =0$. Fix $x^* \in S_{X^*}$, $\alpha>0$, and $\varepsilon>0$. We will find $u\in S(B_X,x^*,\alpha)$ such that $\big\|x/\|x\|-u\big\|\ge 2-\varepsilon$. Set $f=(x^*,0)$ and $Z=X\oplus_N Y$. Then $f\in S_{Z^*}$. Choose $\delta>0$ such that, for every $p,q,r\ge0$, if
$$2-\delta \le N(p,q)\leq N(r,q) \leq 2\quad \textnormal{and}\quad  q<2-\delta,$$
then $|p-r|<\|x\|\varepsilon/2$.
There is no loss of generality in assuming that $\delta\le \varepsilon/2$, $\delta\le \alpha$, and $(1-\delta)N(1,1)>1$ (here we use the fact that $N(1,1)>1$, i.e. $N$ is not $\ell_\infty$-norm).

Since $(x,y)$ is a Daugavet-point in $Z$, there exists $(u,v)\in S(B_Z, f, \delta)$ such that $\|(x,y)-(u,v)\|_N \ge 2-\delta$. Consequently, 

$$x^*(u) = f(u,v) > 1-\delta \geq 1-\alpha,$$
which gives us that $u\in S(B_X,x^*,\alpha)$ and $\|u\|>1-\delta$. We also conclude that $\|v\|<1-\delta,$ because otherwise
$$N(\|u\|, \|v\|) \geq N(1-\delta,1-\delta) = (1-\delta)N(1,1)>1,$$
a contradiction.
In addition we have that
$$2-\delta \le N\big(\|x-u\|,\|y -v\|\big) \leq N\big(\|x\| + \|u\|,\|y-v\|\big) \leq 2$$
and
$$\|y -v\|\leq\|y\|+\|v\|< 1 + 1-\delta = 2-\delta.$$
Hence, by the choice of $\delta$, we have that
$$\big| \|x-u\| - (\|x\|+\|u\|)\big| < \|x\|\varepsilon/2.$$
Thus $\|x-u\|>\|x\| + \|u\|-\|x\|\varepsilon/2,$ and therefore,
\begin{align*}
    \Big\|\frac{x}{\|x\|}-u\Big\| & = \Big\|\frac{1}{\|x\|}(x-u)-\Big(u - \frac{1}{\|x\|}u\Big)\Big\| \\&
    \geq
    \frac{1}{\|x\|}\|x-u\| - \Big(\frac{1}{\|x\|} - 1\Big)\|u\|\\&
    \geq
    \frac{1}{\|x\|}\Big(\|x\| + \|u\| - \frac{\|x\|\varepsilon}{2}\Big) - \frac{\|u\|}{\|x\|}+\|u\|\\&
    =
    1+\|u\| - \frac{\varepsilon}{2} \\& >
    1+1-\delta - \frac{\varepsilon}{2}\\& \geq
     2-\varepsilon.
\end{align*}
According to Lemma \ref{Dp-crit}, the element $x/\|x\|$ is a Daugavet-point in $X$.
\end{proof}

%
%
%


Let us now move on to the case of $\ell_\infty$-norm. Recall that if $x$ is a Daugavet-point in $X$, then $(x,y)$ is a Daugavet-point in $X\oplus_\infty Y$ for every $y\in B_Y$ (see Proposition \ref{(x,by)Daugavet-point in infty-sum}). This means that if $\ell_\infty$-sum of two Banach spaces has a Daugavet-point, then both summands need not have a Daugavet-point. However, in the following we prove that at least one of the summands has a Daugavet-point.
\begin{thm}\label{Daugavet-points to components infty}
Let $X$ and $Y$ be Banach spaces, $x\in B_X$, $y\in B_Y$. Assume that $(x,y)$ is a Daugavet-point in $X\oplus_\infty Y$. Then $x$ is a Daugavet-point in $X$ or $y$ is a Daugavet-point in $Y$.
\end{thm}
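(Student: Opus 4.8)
The plan is to prove the contrapositive: assuming that neither $x$ is a Daugavet-point in $X$ nor $y$ is a Daugavet-point in $Y$, I will produce a single slice of $B_Z$, where $Z=X\oplus_\infty Y$, on which the distance to $(x,y)$ stays uniformly below $2$, contradicting the slice criterion of Lemma \ref{Dp-crit}. The starting observation is that $\|(x,y)\|_\infty=\max(\|x\|,\|y\|)=1$, so at least one coordinate has norm one, while a coordinate of norm strictly less than one can never be a Daugavet-point and so costs nothing in the dichotomy.

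First I would attach to each coordinate a norming-type functional together with a quantitative distance defect. For the first coordinate: if $\|x\|=1$, then since $x$ is not a Daugavet-point, Lemma \ref{Dp-crit} provides $x^*\in S_{X^*}$, $\alpha_1>0$ and $\varepsilon_1>0$ such that $\|x-u\|\le 2-\varepsilon_1$ for every $u\in S(B_X,x^*,\alpha_1)$; if instead $\|x\|<1$, I simply take any $x^*\in S_{X^*}$, put $\alpha_1=1$ and $\varepsilon_1=1-\|x\|>0$, and use the triangle inequality $\|x-u\|\le\|x\|+1=2-\varepsilon_1$, valid for every $u\in B_X$. The symmetric construction yields $y^*\in S_{Y^*}$, $\alpha_2>0$, $\varepsilon_2>0$ for the second coordinate. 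In both cases the upshot for a coordinate is identical: every element of the designated component slice is at distance at most $2-\varepsilon_i$ from the corresponding coordinate of $(x,y)$.

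The key step is to combine these two independent slices into a single slice of $B_Z$. Using $(X\oplus_\infty Y)^*=X^*\oplus_1 Y^*$, I would take $f=\big(\tfrac12 x^*,\tfrac12 y^*\big)$, which lies in $S_{Z^*}$ since $\|f\|=\tfrac12\|x^*\|+\tfrac12\|y^*\|=1$. The crucial observation is that an element $(u,v)\in B_Z$ satisfies $x^*(u)\le 1$ and $y^*(v)\le 1$, so the inequality $\tfrac12 x^*(u)+\tfrac12 y^*(v)>1-\alpha$ forces both $x^*(u)>1-2\alpha$ and $y^*(v)>1-2\alpha$ simultaneously. Choosing $\alpha>0$ with $2\alpha\le\min(\alpha_1,\alpha_2)$ thus pushes $u$ into $S(B_X,x^*,\alpha_1)$ and $v$ into $S(B_Y,y^*,\alpha_2)$ for every $(u,v)\in S(B_Z,f,\alpha)$.

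Finally I would read off the distance estimate: for every such $(u,v)$,
\[
\|(x,y)-(u,v)\|_\infty=\max\big(\|x-u\|,\|y-v\|\big)\le 2-\min(\varepsilon_1,\varepsilon_2)<2 .
\]
Hence, once $\varepsilon<\min(\varepsilon_1,\varepsilon_2)$, no element of the slice $S(B_Z,f,\alpha)$ is at distance $\ge 2-\varepsilon$ from $(x,y)$, which by Lemma \ref{Dp-crit} contradicts $(x,y)$ being a Daugavet-point. The only genuine obstacle is this combination step: one must control both coordinates at once, and the averaging choice of $f$ together with the $\ell_1$-nature of the dual is precisely what converts ``almost norming for $f$'' into ``almost norming in each coordinate''. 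The coordinates of norm less than one pose no difficulty, since for them the distance bound is automatic from the triangle inequality, so the same uniform argument covers every configuration of $\|x\|,\|y\|$ with $\max(\|x\|,\|y\|)=1$.
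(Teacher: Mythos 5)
Your proof is correct, and its core coincides with the paper's own argument: in the main case $\|x\|=\|y\|=1$ the paper likewise takes the averaged functional $f=\tfrac12(x^*,y^*)$ and exploits the $\ell_1$-structure of $(X\oplus_\infty Y)^*$ to show that a slice $S(B_Z,f,\cdot)$ of the sum is contained in the product of the two component slices, which yields a uniform distance bound strictly below $2$ and contradicts Lemma \ref{Dp-crit}. The difference is in the decomposition. The paper argues by cases: when one coordinate, say $y$, has $\|y\|<1$, it gives a direct positive argument --- taking $f=(x^*,0)$ and using the Daugavet-ness of $(x,y)$, any $(u,v)$ in the slice with $\|(x,y)-(u,v)\|_\infty\ge 2-\delta$ must satisfy $\|x-u\|\ge 2-\delta$ because $\|y-v\|\le\|y\|+1<2-\delta$, so $x$ itself is shown to be a Daugavet-point --- and only in the case $\|x\|=\|y\|=1$ does it pass to the contrapositive. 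You instead run a single contrapositive covering every configuration, absorbing sub-unit coordinates through the observation that they carry an automatic distance defect $1-\|x\|$ by the triangle inequality. Your organization is more uniform and slightly shorter, and it loses nothing: since a vector of norm less than $1$ can never be a Daugavet-point, your disjunction recovers exactly the information that the paper's first case makes explicit. A further small simplification on your side: the paper, in its second case, normalizes the slice elements to the sphere ($u/\|u\|$, $v/\|v\|$) and consequently ends with the weaker bound $2-(\varepsilon-\alpha)$, whereas you apply the empty-intersection property directly to ball elements and keep the cleaner bound $2-\min(\varepsilon_1,\varepsilon_2)$. One cosmetic remark: the negation of Lemma \ref{Dp-crit} actually gives the strict inequality $\|x-u\|<2-\varepsilon_1$ on the slice; you write $\le$, which is weaker and equally sufficient.
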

\begin{proof}
Firstly, look at the case, where only one of $x$ and $y$ has norm 1, and the other has norm less than 1. 
Assume that $\|y\|<1$ and let us prove that $x$ is a Daugavet-point in $X$. (The statement, if $\|x\|<1$, then $y$ is a Daugavet-point in $Y$, can be proved similarly.) Choose $\delta>0$ such that $\delta\le \varepsilon$ and $\|y\| < 1-\delta.$ 
%
%
%
Fix $x^*\in S_{X^*}$, $\alpha>0$ and $\varepsilon>0$. Set $Z=X\oplus_\infty Y$ and
$f=(x^*,0)\in S_{Z^*}$. Since $(x,y)$ is a Daugavet-point then there exists $(u,v)\in S(B_Z, f, \alpha)$ such that $\|(x,y)-(u,v)\|_{\infty}\ge 2-\delta$. Therefore, $x^*(u)=f(u,v)>1-\alpha$, i.e. $u\in S(B_X, x^*, \alpha)$ and
$$\|y-v\|\le \|y\|+\|v\| <1-\delta+1=2-\delta.$$

Combining this with the fact that
$$\|(x,y) - (u,v)\|_\infty =
    \max\{\|x-u\|,\|y-v\|\} \geq 2-\delta,$$
we get that $\|x-u\|\ge 2-\delta\geq 2-\varepsilon$. Thus, $x$ is a Daugavet-point in $X$.

Secondly, consider the case, where both $x$ and $y$ are of norm 1, and neither of them is a Daugavet-point. 
Then we can fix slices $S(B_X, x^*,\alpha)$ and $S(B_Y,y^*,\alpha)$, and $\varepsilon>0$ such that $S(B_X, x^*,\alpha)\cap \Delta_{\varepsilon}(x) = \emptyset$ and $S(B_Y, y^*,\alpha)\cap \Delta_{\varepsilon}(y) = \emptyset.$ There is no loss of generality in assuming that $\alpha < \varepsilon <1$.
Set $f= 1/2(x^*,y^*)$ and $Z=X\oplus_\infty Y$, and consider the slice $S(B_Z, f, \alpha/2).$ Note that
$$S(B_Z, f, \alpha/2)\subset S(B_X,x^*,\alpha) \times S(B_Y,y^*,\alpha).$$
%
Let $(u,v)\in S(B_Z, f, \alpha/2)\cap S_Z$ be arbitrary. Then $$\|u\|>1-\alpha>0\quad \textnormal{and}\quad \|v\|>1-\alpha>0,$$
and
$$u/\|u\| \in S(B_X, x^*,\alpha)\quad \textnormal{and}\quad v/\|v\|\in S(B_Y,y^*,\beta).$$
Therefore
\begin{align*}
    \|(u,v) - (x,y)\|_\infty & = \max\{\|u-x\|,\|v-y\|\}\\& \leq
    \max\Big\{\Big\Vert u-\frac{u}{\|u\|}\Big\Vert + \Big\Vert \frac{u}{\|u\|} -x\Big\Vert, \Big\Vert v-\frac{v}{\|v\|}\Big\Vert + \Big\Vert \frac{v}{\|v\|} -y\Big\Vert\Big\} \\& <
    \alpha + 2-\varepsilon \\&
    = 2-(\varepsilon - \alpha).    
\end{align*}
As a result, $S(B_Z, f, \alpha/2)\cap \Delta_{\varepsilon-\alpha} (x,y)=\emptyset,$ which by Lemma \ref{Dp-crit} implies that $(x,y)$ is not a Daugavet-point.
\end{proof}
\section{Delta-points from direct sums to summands}\label{sec: Delta-points}
%
%
%
As mentioned in Introduction, $\Delta$-points pass from component spaces to the absolute sum for every absolute normalised norm. In fact, this observation let the authors of \cite{AHLP} conclude that $\Delta$-points are indeed different from Daugavet-points. In this section we clarify the existence of $\Delta$-points in the component spaces, given that the absolute sum has $\Delta$-points. Surprisingly, the case of $\Delta$-points is different from the case of Daugavet-points even for $\ell_\infty$-norm (see Proposition \ref{Unexpected Delta-points}).

From here on, we consider arbitrary absolute normalised norms $N$. Firstly, we show that, as expected, for most absolute normalised norms $N$ the component spaces have $\Delta$-points, given the absolute sum has $\Delta$-points.
\begin{thm}\label{Delta-points on components}
Let $X$ and $Y$ be Banach spaces, $x\in S_X$, $y\in S_Y$, $N$ an absolute normalised norm on $\err2$, and $a,b\geq 0$ such that $N(a,b)=1$. Assume that $(ax,by)$ is a $\Delta$-point in $X\oplus_N Y$.
\begin{enumerate}
    \item[{\rm(a)}] If $b\neq 1$, then $x$ is a $\Delta$-point in $X$.
    \item[{\rm(b)}] If $a\neq 1$, then $y$ is a $\Delta$-point in $Y$.
\end{enumerate}
\end{thm}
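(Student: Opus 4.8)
The plan is to prove statement (a); statement (b) follows by the symmetric argument interchanging the roles of $X$ and $Y$. I would use the slice characterisation of $\Delta$-points, Lemma \ref{Delta-points crit}. So fix a slice $S(B_X, x^*, \alpha)$ of $B_X$ containing $x$ (i.e.\ $x^*(x) > 1-\alpha$ with $x^* \in S_{X^*}$) together with an $\varepsilon > 0$; the goal is to produce $u \in S(B_X, x^*, \alpha)$ with $\|x - u\| \geq 2 - \varepsilon$. The idea is to lift the functional $x^*$ to a functional $f$ on $Z = X \oplus_N Y$ that attains a value close to $1$ at $(ax, by)$, so that the slice it generates contains $(ax, by)$, and then exploit that $(ax, by)$ is a $\Delta$-point to extract an element $(u_0, v_0)$ in that slice far from $(ax,by)$. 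The hypothesis $b \neq 1$ is what forces the first coordinate of such a far point to carry the weight: since $b < 1$, the $Y$-coordinate $\|by - v_0\|$ is bounded away from $2$, so the distance $\|(ax,by) - (u_0,v_0)\|_N \approx 2$ must come almost entirely from $\|ax - u_0\|$ being near $2$.

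First I would choose the lifted functional. The natural choice is $f = (x^*, 0) \in S_{Z^*}$, which satisfies $f(ax, by) = a\, x^*(x)$; but since $a$ may be less than $1$ this only puts $(ax,by)$ in a slice of the form $S(B_Z, f, \cdot)$ if $a$ is close enough to $1$, which need not hold. A cleaner route is to pick $y^* \in S_{Y^*}$ with $y^*(y)$ near $1$ and set $f = \lambda(x^*, y^*)$ for the normalising constant $\lambda$ making $\|f\|_{Z^*} = 1$ (using the characterisation of the dual norm of an absolute sum, $\|(x^*,y^*)\|_{Z^*} = N^*(\|x^*\|,\|y^*\|)$), so that $f(ax,by)$ is as close to $1$ as we like and $(ax,by)$ lies in a thin slice $S(B_Z, f, \beta)$. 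Then, after choosing the parameters $\beta$ and the target distance carefully in terms of $\alpha$ and $\varepsilon$, Lemma \ref{Delta-points crit} applied to the $\Delta$-point $(ax,by)$ yields $(u_0, v_0) \in S(B_Z, f, \beta)$ with $\|(ax,by) - (u_0,v_0)\|_N$ arbitrarily close to $2$.

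Next I would transfer the information back to $X$. From membership in the slice I get $x^*(u_0) > 1 - \alpha$ after renormalising, i.e.\ $u := u_0/\|u_0\|$ (with $\|u_0\|$ close to $1$) lands in $S(B_X, x^*, \alpha)$; the renormalisation cost is controlled exactly as in the proof of Theorem \ref{Daugavet-points to components N}. For the distance, I would write
\begin{align*}
2 - \delta &\le N\big(\|ax - u_0\|,\ \|by - v_0\|\big), \qquad \|by - v_0\| \le b + \|v_0\| \le b + 1 < 2,
\end{align*}
and use the strict monotonicity of the absolute norm $N$ in each coordinate together with $b < 1$ to conclude that $\|ax - u_0\|$ must be near $2$, forcing $\|x - u\|$ near $2$ after renormalisation.

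The main obstacle I expect is quantitative: because $a$ and $b$ need not equal the extremal values and $N$ can be any absolute normalised norm, the step "$\|by - v_0\|$ bounded away from $2$ forces $\|ax - u_0\|$ near $2$" requires a uniform continuity/monotonicity estimate for $N$ on the relevant region of the positive quadrant, analogous to the choice of $\delta$ in Theorem \ref{Daugavet-points to components N} (where $\delta$ was selected so that $2 - \delta \le N(p,q) \le N(r,q) \le 2$ with $q < 2 - \delta$ yields $|p - r|$ small). Pinning down this $\delta$ as a function of $b$, $\alpha$, $\varepsilon$, and $N$ — and then verifying that with $u = u_0/\|u_0\|$ we genuinely recover $\|x - u\| \ge 2 - \varepsilon$ — is the delicate bookkeeping that carries the proof.
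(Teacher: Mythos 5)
Your high-level strategy (lift the slice, extract a far point from the $\Delta$-property of $(ax,by)$, transfer back) can in principle be made to work, but two of the steps you treat as routine are precisely where the proof's real ideas live, and as written both fail. First, the lifted functional is wrong. With $f=\lambda(x^*,y^*)$, $\lambda=1/N^*(1,1)$, one always has $f(ax,by)\le \lambda\big(a\,x^*(x)+b\,y^*(y)\big)\le (a+b)/N^*(1,1)$, and this is bounded away from $1$ unless $(1,1)$ happens to norm $(a,b)$ in the $N$--$N^*$ duality: for the $\ell_2$-sum with $(a,b)=(1,0)$ you get $f(ax,by)\le 1/\sqrt{2}$, so $(ax,by)$ lies in \emph{no} slice $S(B_Z,f,\beta)$ with $\beta<1-1/\sqrt{2}$, and your claim that ``$f(ax,by)$ is as close to $1$ as we like'' is false. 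The correct lift uses a dual pair $c,d\ge 0$ with $N^*(c,d)=1$ and $ac+bd=1$; and even then the slices of $B_Z$ you can guarantee contain $(ax,by)$ have width of order $\alpha$ (since all you know is $x^*(x)>1-\alpha$), so $\beta$ cannot be chosen small at will.

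Second, and more seriously, membership of the far point $(u_0,v_0)$ in such a $Z$-slice does \emph{not} transfer back to $x^*(u_0/\|u_0\|)>1-\alpha$: the functional value may be carried entirely by the $Y$-coordinate and $\|u_0\|$ may be arbitrarily small (even $0$). Likewise, $b<1$ does not force $\|ax-u_0\|$ near $2$; the monotonicity/$\delta$-argument only forces $\|ax-u_0\|\approx a+\|u_0\|$, which is near $2$ only when $a$ and $\|u_0\|$ are both near $1$. Concretely, take the $\ell_1$-sum with $0<a,b<1$ (so $c=d=1$), $Y=\ell_1$, $y=e_1$, $y^*=(1,1,\dots)$: the point $(0,e_2)$ lies in every slice $S(B_Z,(x^*,y^*),\beta)$ and satisfies $\|(ax,by)-(0,e_2)\|_1=a+b+1=2$, yet carries no information about $X$ whatsoever; the $\Delta$-property of $(ax,by)$ may only ever hand you such points. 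This is exactly the hole the paper's proof is built to close: it argues by contradiction (so that a slice $S(B_X,x^*,\alpha)\ni x$ with $S(B_X,x^*,\alpha)\cap\Delta_\varepsilon(x)=\emptyset$ is available) and uses the \emph{down-weighted} functional $f=(cx^*,(1-\alpha)dy^*)$ together with a margin $\gamma>0$ such that $f(ax,by)>1-(\alpha-\gamma)$. Then any $(u,v)$ with $f(u,v)>1-(\alpha-\gamma)$ satisfies $cx^*(u)>(1-\alpha)c\|u\|+\gamma$, which yields \emph{simultaneously} $u/\|u\|\in S(B_X,x^*,\alpha)$ and the lower bound $\|u\|>\gamma$ needed for the renormalisation and for the two-case estimate of $\|ax-u\|$ (the down-weighting penalises points whose mass sits in $Y$: in the example above, $f(0,e_2)=1-\alpha$ falls outside the slice). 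Without the $(1-\alpha)$-factor, or an equivalent device, the transfer step in your plan fails, so the proposal has a genuine gap.
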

\begin{proof}
We prove only the first statement, the second can be proved similarly. Assume that $b\not =1$. Note that then $a\neq 0$. Let $c,d\geq 0$ be such that $N^*(c,d)=1$ and $ac+bd=1$.

Suppose that $x$ is not a $\Delta$-point in $X$. Then, by Lemma \ref{Delta-points crit}, there exist $x^*\in S_{X^*}$, $\alpha>0$, and $\varepsilon>0$ such that
$$x\in S(B_X, x^*,\alpha) \quad \textnormal{and}\quad S(B_X, x^*,\alpha)\cap \Delta_\varepsilon(x) = \emptyset.$$
Let $y^*\in S_{Y^*}$ be such that $y^*(y) = 1$ and let $f=(cx^*,(1-\alpha)dy^*)$. Then 
$$f(ax,by)=acx^*(x)+(1-\alpha)bdy^*(y)>(1-\alpha)(ac+bd)=1-\alpha.$$ 
Choose $\beta,\gamma>0$ such that $\beta<a\varepsilon$ and $\beta<\gamma \varepsilon$, and $$f(ax,by) > 1 - (\alpha-\gamma).$$
Now choose $\delta>0$ such that, for every $p,q,r\ge0$, if
$$2-\delta \le N(p,q)\leq N(r,q) \leq 2 \quad \textnormal{and}\quad  q<2-\delta,$$
then $|p-r|<\beta$.
There is no loss of generality in assuming that $b<1-\delta$.
There exists $(u,v)\in B_Z$, where $Z=X\oplus_N Y$, such that
$$f(u,v) > 1 - (\alpha-\gamma)\quad \textnormal{and}\quad \|(ax,by)-(u,v)\|_N\ge 2-\delta.$$
Then
\begin{align*}
    cx^*(u)+(1-\alpha)d\|v\|&\ge cx^*(u)+(1-\alpha)dy^*(v)\\&
    =f(u,v)\\&
    >1-(\alpha-\gamma)\\
    &>1-\alpha\\&
    \ge (1-\alpha)(c\|u\|+d\|v\|),
\end{align*}
which yields
$$cx^*(u) > (1-\alpha) c\|u\|,$$
i.e. $x^*(u/\|u\|) > 1-\alpha$. Since $S(B_X,x^*,\alpha)\cap \Delta_\varepsilon(x) = \emptyset,$ we know now that $\big\|x - u/\|u\|\big \| < 2-\varepsilon.$
We now show that $\|ax-u\| < a+\|u\|-\beta$. Let us consider two cases. If $\|u\|\geq a$, then
\begin{align*}
    \|ax-u\| & \leq \Big\Vert ax - a\frac{u}{\|u\|}\Big\Vert + \Big\Vert a\frac{u}{\|u\|} - u\Big\Vert\\& \leq
    a(2-\varepsilon) + \big\vert a-\|u\|\big\vert\\&
    = a+\|u\| -a\varepsilon\\&
    <a+\|u\| -\beta.
\end{align*}
On the other hand, if $a\geq \|u\|$, we have
\begin{align*}
c\|u\|+(1-\alpha)d\|v\|&\ge cx^*(u)+(1-\alpha)dy^*(v)\\&
=f(u,v)\\
&>1-\alpha+\gamma\\&
\ge (1-\alpha)d\|v\|+\gamma,
\end{align*}
from which we conclude $\|u\|\ge c\|u\|> \gamma$. Now we see that
\begin{align*}
    \|ax-u\| & \leq \big\Vert ax - \|u\|x\big\Vert + \big\Vert \|u\|x - u\big\Vert\\& \leq
    a- \|u\| +\|u\|(2-\varepsilon)\\&
    = a+\|u\| -\|u\|\varepsilon\\& < a+\|u\|-\gamma\varepsilon\\&<a+\|u\| -\beta.
\end{align*}
That gives us the following:
\begin{align*}
    2-\delta \le \|(ax,by)-(u,v)\|_N &= N(\|ax-u\|,\|by-v\|)\\&
    \le N(a+\|u\|-\beta,b+\|v\|)
\end{align*}
and therefore
$$2-\delta\le N(a+\|u\|-\beta,b+\|v\|) \le N(a+\|u\|,b+\|v\|)\le 2.$$
Since $b+\|v\|< 2-\delta$, we have by the choice of $\delta$ that $\big|(a+\|u\|-\beta)-(a+\|u\|)\big|<\beta$, i.e. $\beta<\beta$, a contradiction. Hence $x$ is a $\Delta$-point in $X$.
\end{proof}
Theorem \ref{Delta-points on components} does not cover the case $a=b=1$ (for $\ell_\infty$-norm). Our original assumption was that if $(x,y)$ is a $\Delta$-point in $X\oplus_\infty Y$ for some $x\in S_X$ and $y\in S_Y$, then either $x$ or $y$ must also be a $\Delta$-point (respectively in $X$ or $Y$), similarly to the case of Daugavet-points. However, we show that in this case our intuition was wrong. Moreover, we introduce the conditions that $x\in S_X$ and $y\in S_Y$ must satisfy in order for $(x,y)$ to be a $\Delta$-point in $X\oplus_\infty Y$. These results rely heavily on the concept of another type of unit sphere elements similar to $\Delta$-points (compare with Lemma \ref{Delta-points crit}).
\begin{defn}
Let $X$ be a Banach space, $x\in S_X$, and $k>1$. We say that $x$ is a \emph{$\Delta_k$-point} in $X$, if for every $S(B_X,x^*,\alpha)$ with $x\in S(B_X,x^*,\alpha)$ and for every $\varepsilon>0$ there exists $u\in S(B_X,x^*,k\alpha)$ such that $\|x-u\|\ge 2-\varepsilon$.
\end{defn}
Every $\Delta$-point is obviously a $\Delta_k$-point for every $k>1$. In contrast, the reverse does not hold, since the upcoming example shows the existence of a $\Delta_k$-point that is not a $\Delta$-point, which proves that the concepts of $\Delta$-point and $\Delta_k$-point do not coincide.
\begin{eks}\label{Delta-point ex}
Let $X$ and $Y$ be Banach spaces, $x\in S_X$ and $y\in S_Y$, and let $k>1$. Set $Z=X\oplus_1Y$ and $z=\big((1-1/k)x ,y/k\big)$. Assume that $x$ is not a $\Delta$-point in $X$ and $y$ is a $\Delta$-point in $Y$. Then, according to Theorem \ref{Delta-points on components}, $z$ is not a $\Delta$-point in $Z$.

Fix $f=(x^*,y^*)\in S_{Z^*}$ and $\alpha>0$, such that $f(z)>1-\alpha$, and fix $\varepsilon>0$. Then 
$$1-\frac{1}{k}+\frac{1}{k}y^*(y)\ge \Big(1-\frac{1}{k}\Big) x^*(x)+ \frac{1}{k}y^*(y)= f(z) >1-\alpha.$$
It follows that $y^*(y)>1-\alpha k$. Since $y$ is a $\Delta$-point, there exists $v\in B_Y$ such that
$$y^*(v)>1-\alpha k\quad \textnormal{and} \quad \|y-v\|\ge 2-\varepsilon.$$
Then $f(0,v)=y^*(v)>1-\alpha k$,
i.e. $(0,v)\in S(B_Z,f,\alpha k)$, and
\begin{align*}
\Big\|\Big(\Big(1-\frac{1}{k}\Big)x, \frac{1}{k} y\Big)-(0,v)\Big\|_1 &= \Big(1-\frac{1}{k}\Big)\|x\|+\Big\|\frac{1}{k}y-v\Big\|\\
&\ge \Big(1-\frac{1}{k}\Big)+ \|y-v\|-\Big(1-\frac{1}{k}\Big)\|y\|\\& \ge 2-\varepsilon.
\end{align*}
This proves that $z$ is a $\Delta_k$-point.
\end{eks}
Surprisingly $(x,y)$ with $\|x\|=\|y\|=1$ can be a $\Delta$-point in $X\oplus_\infty Y$ even if neither $x$ nor $y$ is a $\Delta$-point in $X$ and $Y$, respectively. This is immediate from Example \ref{Delta-point ex} and following Proposition \ref{Unexpected Delta-points}.
\begin{prop}\label{Unexpected Delta-points}
Let $X$ and $Y$ be Banach spaces, $x\in S_X$ and $y\in S_Y$. Let $p,q>1$ satisfy $1/p+1/q=1$.
\begin{itemize}
    \item [{\rm (a)}]{If $x$ is a $\Delta_p$-point in $X$ and $y$ is a $\Delta_q$-point in $Y$, then $(x,y)$ is a $\Delta$-point in $X\oplus_\infty Y$.}
    \item[{\rm (b)}]{If $x$ is not a $\Delta_p$-point in $X$ and $y$ is not a $\Delta_q$-point in $Y$, then $(x,y)$ is not a $\Delta$-point in $X\oplus_\infty Y$.}
\end{itemize}
\end{prop}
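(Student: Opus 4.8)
The plan is to prove both parts through the slice characterisation of $\Delta$-points (Lemma \ref{Delta-points crit}), exploiting the fact that the dual of $Z=X\oplus_\infty Y$ is $X^*\oplus_1 Y^*$: every $f\in S_{Z^*}$ has the form $f=(x^*,y^*)$ with $\|x^*\|+\|y^*\|=1$. Writing $s=\|x^*\|$ and $t=\|y^*\|$, a slice $S(B_Z,f,\gamma)$ containing $(x,y)$ translates, via $x^*(x)\le s$ and $y^*(y)\le t$ and $\|x\|=\|y\|=1$, into the single scalar inequality $a_1+a_2<\gamma$ for the defects $a_1=s-x^*(x)\ge 0$ and $a_2=t-y^*(y)\ge 0$. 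The conjugacy relation $1/p+1/q=1$ is precisely what couples these two defects.

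For part (a), given such a slice I would argue by a dichotomy: since $a_1+a_2<\gamma$ and $1/p+1/q=1$, at least one of $pa_1<\gamma$ or $qa_2<\gamma$ must hold, for otherwise $a_1\ge\gamma/p$ and $a_2\ge\gamma/q$ would force $a_1+a_2\ge\gamma$. Say $pa_1<\gamma$. Writing $\tilde x^*=x^*/s$, so that $x\in S(B_X,\tilde x^*,r)$ for every $r>a_1/s$, I can pick $r$ with $a_1/s<r<\gamma/(ps)$ and apply the $\Delta_p$-property of $x$ to the slice $S(B_X,\tilde x^*,r)$. This yields $u\in S(B_X,\tilde x^*,pr)$ with $\|x-u\|\ge 2-\varepsilon$ and $x^*(u)=s\tilde x^*(u)>s-spr$, where $spr<\gamma$. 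Choosing $v\in B_Y$ with $y^*(v)>t-(\gamma-spr)$, which is possible since $\sup_{B_Y}y^*=t$, keeps $(u,v)$ inside $S(B_Z,f,\gamma)$ because $f(u,v)>(s-spr)+(t-(\gamma-spr))=1-\gamma$, while $\|(x,y)-(u,v)\|_\infty\ge\|x-u\|\ge 2-\varepsilon$. The case $qa_2<\gamma$ is symmetric (one moves in the second coordinate), and the degenerate cases $s=0$ or $t=0$ are handled directly, using that a $\Delta_p$- (resp. $\Delta_q$-) point automatically possesses points almost at distance $2$. The main obstacle here is recognising that the conjugacy of $p$ and $q$ is exactly the condition guaranteeing the above dichotomy; once it is in hand, only one coordinate must be moved and the remainder is bookkeeping.

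For part (b), the negations of the hypotheses provide $x^*\in S_{X^*}$ and $\alpha_1,\varepsilon_1>0$ with $x\in S(B_X,x^*,\alpha_1)$ and $S(B_X,x^*,p\alpha_1)\cap\Delta_{\varepsilon_1}(x)=\emptyset$, and symmetrically $y^*,\alpha_2,\varepsilon_2$ for $y$. I would then build a single separating slice on $Z$ of the form $f=(\lambda x^*,(1-\lambda)y^*)$, which lies in $S_{Z^*}$ for every $\lambda\in[0,1]$. The point $(x,y)$ lies in $S(B_Z,f,\gamma_0)$ with $\gamma_0=\lambda\alpha_1+(1-\lambda)\alpha_2$, and for any $(u,v)$ in this slice, bounding the complementary coordinate by its norm gives $x^*(u)>1-\gamma_0/\lambda$ and $y^*(v)>1-\gamma_0/(1-\lambda)$. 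To force both $u\in S(B_X,x^*,p\alpha_1)$ and $v\in S(B_Y,y^*,q\alpha_2)$ simultaneously I need $\gamma_0/\lambda\le p\alpha_1$ and $\gamma_0/(1-\lambda)\le q\alpha_2$; these two requirements reduce to $(1-\lambda)\alpha_2\le\lambda(p-1)\alpha_1$ and $\lambda(p-1)\alpha_1\le(1-\lambda)\alpha_2$, hence are compatible only for the single value $\lambda=\alpha_2/(\alpha_2+(p-1)\alpha_1)\in(0,1)$, at which both become equalities precisely because $(p-1)(q-1)=1$. With this $\lambda$, every point of $S(B_Z,f,\gamma_0)$ avoids $\Delta_{\varepsilon_1}(x)$ in the first coordinate and $\Delta_{\varepsilon_2}(y)$ in the second, so $S(B_Z,f,\gamma_0)\cap\Delta_\varepsilon(x,y)=\emptyset$ for $\varepsilon=\min\{\varepsilon_1,\varepsilon_2\}$, and Lemma \ref{Delta-points crit} shows $(x,y)$ is not a $\Delta$-point. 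The delicate point is the forced, balanced choice of $\lambda$: it is the unique weight for which the two coordinate-slices of radii $p\alpha_1$ and $q\alpha_2$ are captured by one slice of $B_Z$, and its existence is exactly what the conjugacy of $p$ and $q$ provides.
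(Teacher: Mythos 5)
Your proof is correct and follows essentially the same route as the paper's: for (a) the same dichotomy on the slice defects $a_1<\gamma/p$ or $a_2<\gamma/q$ forced by $1/p+1/q=1$, moving only one coordinate and norming the other; for (b) the same weighted functional $f=(\lambda x^*,(1-\lambda)y^*)$ with the identical weight, since your $\lambda=\alpha_2/(\alpha_2+(p-1)\alpha_1)$ coincides with the paper's choice $(1-\lambda)/\lambda=p\alpha_1/(q\alpha_2)$ because $p/q=p-1$. Your treatment is in fact slightly more careful than the paper's, as you explicitly normalise $x^*$ and handle the degenerate cases $\|x^*\|=0$ or $\|y^*\|=0$.
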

In fact, Proposition \ref{Unexpected Delta-points} gives equivalent condition for $(x,y)$ being a $\Delta$-point in $X\oplus_\infty Y$ where neither $x$ nor $y$ is a $\Delta$-point in $X$ and $Y$, respectively (see Proposition \ref{nec-suf cond for Delta-p in infty-sum} below).  
%
%
\begin{proof}[Proof of Proposition~\ref{Unexpected Delta-points}]
(a) Assume that $x$ is a $\Delta_p$-point in $X$ and $y$ is $\Delta_q$-point in $Y$.
Set $Z=X\oplus_\infty Y$. Fix $f=(x^*,y^*)\in S_{Z^*}$ and $\alpha>0$ such that $(x,y)\in S(B_Z,f,\alpha)$, and fix $\varepsilon>0$. Then
\[
x^*(x)+y^*(y)=f(x,y)>1-\alpha
\]
from what we get
\[
x^*(x)>1-(\alpha+y^*(y))=\|x^*\|-\big(\alpha+y^*(y)-\|y^*\|\big).
\]
We now show that there exists $(u,v)\in S(B_Z,f,\alpha)$ such that $$\|(x,y)-(u,v)\|_\infty \geq 2-\varepsilon.$$
Let us consider two cases. If $\alpha+y^*(y)-\|y^*\|\le \alpha/p$, then $x^*(x)>\|x^*\|-\alpha/p$ and therefore, since $x$ is a $\Delta_p$-point, there exists $u\in B_X$ such that $x^*(u)>\|x^*\|-\alpha$ and $\|x-u\|\ge 2-\varepsilon$. Let $v\in B_Y$ be such that 
\[
f(u,v)= x^*(u)+y^*(v)>\|x^*\|-\alpha+\|y^*\|=1-\alpha.
\]
Then also $\|(x,y)-(u,v)\|_\infty=\max\{\|x-u\|,\|y-v\|\}\ge 2-\varepsilon$.

If $\alpha+y^*(y)-\|y^*\|> \alpha/p$, then
\[
y^*(y)>\|y^*\|- \alpha+\frac{\alpha}{p}=\|y^*\|-\frac{1}{q}\alpha
\]
and analogically, using the fact that $y$ is a $\Delta_q$-point, we can find $(u,v)\in S(B_Z,f,\alpha)$ such that $\|(x,y)-(u,v)\|_\infty \ge 2-\varepsilon$. Therefore $(x,y)$ is a $\Delta$-point.

(b) Assume that $x$ is not a $\Delta_p$-point in $X$ and $y$ is not a $\Delta_q$-point in $Y$.
By definition there exist $x^*\in S_{X^*}$, $y^*\in S_{Y^*}$, and $\alpha_1, \alpha_2,\varepsilon>0$, with $x\in S(B_X,x^*,\alpha_1)$ and $y\in S(B_X,y^*,\alpha_2)$ such that for every $u\in S(B_X,x^*,p\alpha_1)$ and for every $v\in S(B_Y,y^*,q\alpha_2)$ we have
$$\|x-u\|< 2-\varepsilon\quad \textnormal{and}\quad \|y-v\|< 2-\varepsilon.$$
Set $Z=X\oplus_\infty Y$. Let $\lambda\in (0,1)$ satisfy $(1-\lambda)/\lambda=(p\alpha_1)/(q\alpha_2)$, let $\alpha=\lambda \alpha_1+(1-\lambda) \alpha_2$ and let $f=(\lambda x^*, (1-\lambda)y^*)\in S_{Z^*}.$ Then
\begin{align*}
f(x,y)&=\lambda x^*(x)+(1-\lambda)y^*(y)\\&
>\lambda(1-\alpha_1)+(1-\lambda)(1-\alpha_2)\\&
=1-\alpha.
\end{align*}
Fix $(u,v)\in S(B_Z,f,\alpha)$. 
From
$$1-\alpha < f(u,v) = \lambda x^*(u)+(1-\lambda) y^*(v) \leq \lambda x^*(u) + 1-\lambda$$
we get that
\begin{align*}
x^*(u)& > 1-\frac{\alpha}{\lambda}=1- \Big(\alpha_1+\frac{1-\lambda}{\lambda}\alpha_2\Big)=1-p\Big(\frac{\alpha_1}{p}+\frac{\alpha_1}{q}\Big)=1-p\alpha_1.
\end{align*}
Therefore $u\in S(B_X,x^*,p\alpha_1)$ and analogically $v\in S(B_Y,y^*,q\alpha_2)$. It follows that $\|x-u\|< 2-\varepsilon$ and $\|y-v\|< 2-\varepsilon$. Consequently
$$\|(x,y)-(u,v)\|_\infty=\max\{\|x-u\|,\|y-v\|\}< 2-\varepsilon$$
and thus, $(x,y)$ is not a $\Delta$-point.
%
%
%
%
%
%
%
%
%
\end{proof}
\begin{prop}\label{nec-suf cond for Delta-p in infty-sum}
Let $X$ and $Y$ be Banach spaces and $x\in S_X$ and $y\in S_Y$. Assume that neither $x$ nor $y$ is a $\Delta$-point in $X$ and $Y$, respectively. Then the following statements are equivalent:

\begin{enumerate}
    \item[{\rm(i)}] there exist $p,q>1$ with $1/p+1/q=1$ such that $x$ is $\Delta_p$-point in $X$ and $y$ is $\Delta_q$-point in $Y$;
    
    \item[{\rm(ii)}] for every $p,q>1$ with $1/p+1/q=1$ either $x$ is $\Delta_p$-point in $X$ or $y$ is $\Delta_q$-point in $Y$.
\end{enumerate}
\end{prop}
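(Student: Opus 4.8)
The plan is to treat the two implications asymmetrically. The implication (i)$\Rightarrow$(ii) comes essentially for free from Proposition~\ref{Unexpected Delta-points}: if (i) holds then for some conjugate pair $(p,q)$ the point $x$ is a $\Delta_p$-point and $y$ is a $\Delta_q$-point, so part~(a) gives that $(x,y)$ is a $\Delta$-point in $X\oplus_\infty Y$, and feeding this into the contrapositive of part~(b) shows that for \emph{every} conjugate pair at least one of the two $\Delta$-conditions holds, which is exactly (ii). The substance is therefore (ii)$\Rightarrow$(i), which I would prove directly by exploiting the monotone dependence of the $\Delta_k$-property on $k$. The starting observation is that $S(B_X,x^*,k\alpha)\subseteq S(B_X,x^*,k'\alpha)$ whenever $k'>k$, so a $\Delta_k$-point is automatically a $\Delta_{k'}$-point for every $k'>k$; hence $P=\{k>1\colon x\text{ is a }\Delta_k\text{-point}\}$ is upward closed in $(1,\infty)$, and similarly $Q$ for $y$.

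The key step is to show these sets are \emph{closed}, i.e.\ that their infima are attained. I would prove the following: if $k_0\ge 1$ and $x$ is a $\Delta_k$-point for every $k>k_0$, then $x$ is a $\Delta_{k_0}$-point, where ``$\Delta_1$-point'' is read as ``$\Delta$-point''. The argument is a slice-shrinking trick. Given a slice $S(B_X,x^*,\alpha)\ni x$ and $\varepsilon>0$, set $\beta=1-x^*(x)\in[0,\alpha)$ and pick $\alpha'\in(\beta,\alpha)$, so that $x\in S(B_X,x^*,\alpha')$; with $k=k_0\alpha/\alpha'>k_0$ one has $S(B_X,x^*,k\alpha')=S(B_X,x^*,k_0\alpha)$, and applying the $\Delta_k$-property of $x$ to the slice $S(B_X,x^*,\alpha')$ produces $u\in S(B_X,x^*,k_0\alpha)$ with $\|x-u\|\ge 2-\varepsilon$. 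The case $k_0=1$ says that a point which is a $\Delta_k$-point for all $k>1$ is already a $\Delta$-point (via Lemma~\ref{Delta-points crit}); contraposing with the standing hypothesis that neither $x$ nor $y$ is a $\Delta$-point, I conclude that $P$ and $Q$ are \emph{proper} upward closed subsets of $(1,\infty)$, with attained infima strictly above $1$ (or empty).

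To finish (ii)$\Rightarrow$(i) I would pass to the parameter $t=1/p\in(0,1)$ and set $A=\{t\colon x\text{ is a }\Delta_{1/t}\text{-point}\}$ and $B=\{t\colon y\text{ is a }\Delta_{1/(1-t)}\text{-point}\}$. By monotonicity and closedness $A$ is a sub-interval $(0,t_P]$ and $B$ a sub-interval $[t_Q,1)$ (either possibly empty), both relatively closed in $(0,1)$; properness gives $A\neq(0,1)$ and $B\neq(0,1)$. Since the conjugacy relation $1/p+1/q=1$ becomes $1/q=1-t$, statement (ii) reads $A\cup B=(0,1)$ and statement (i) reads $A\cap B\neq\emptyset$. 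If (ii) held while $A\cap B=\emptyset$, then $A$ and $B$ would be disjoint relatively closed sets covering the connected space $(0,1)$, hence each would be clopen, forcing one to be empty and the other to be all of $(0,1)$, contradicting properness. Thus $A\cap B\neq\emptyset$, which is (i).

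I expect the main obstacle to be the closedness lemma, whose whole purpose is to control the boundary: without attainment of the infimum one could have $A=(0,t_P]$ but $B=(t_Q,1)$ with $t_P=t_Q$, so that $A\cup B=(0,1)$ yet $A\cap B=\emptyset$, breaking the equivalence. The delicate part is arranging the auxiliary radius $\alpha'$ and exponent $k$ so that the shrunken slice still contains $x$ while its $k$-dilate lands exactly inside the target slice; and it is precisely the non-$\Delta$-point hypothesis that excludes the degenerate possibilities $A=(0,1)$ and $B=(0,1)$, which is what allows the connectedness argument to close.
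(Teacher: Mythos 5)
Your proof is correct and takes essentially the same route as the paper's: the heart of both arguments is the identical slice-shrinking lemma showing that the set of admissible parameters $k$ is closed (the paper proves $\inf A\in A$ with exactly your choice, writing $\alpha'=\alpha-\gamma$ and $k=a\alpha/(\alpha-\gamma)$), combined with upward monotonicity of the $\Delta_k$-property and the observation that the non-$\Delta$-point hypothesis rules out the degenerate cases. The only cosmetic differences are that the paper proves (i)$\Rightarrow$(ii) by direct monotonicity (for conjugate pairs, $p\ge p'$ or $q\ge q'$) instead of routing through Proposition~\ref{Unexpected Delta-points}, and closes (ii)$\Rightarrow$(i) with the explicit inequality $1/a+1/b\ge 1$ rather than your reparametrised connectedness argument on $(0,1)$ --- both are equivalent repackagings of the same facts.
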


\begin{proof}
(i) $\Rightarrow$ (ii). Assume that (i) holds. Let $p,q>1$ be such that $1/p+1/q=1$. According to $(a)$ $x$ is $\Delta_{p'}$-point in $X$ and $y$ is $\Delta_{q'}$-point in $Y$ for some $p',q'>1$ with $1/p'+1/q'=1$. Then $p'\ge p$ or $q'\ge q$ and therefore $x$ is $\Delta_{p}$-point in $X$ or $y$ is $\Delta_{q}$-point in $Y$, hence (ii) holds.

(ii) $\Rightarrow$ (i). Assume that (ii) holds. Define \[A=\{k\in [1,\infty) \colon x \text{ is $\Delta_k$-point in $X$}\}\] and \[B=\{k\in [1,\infty) \colon y \textnormal{ is $\Delta_k$-point in $Y$}\}.\] 

Firstly, let us examine the case where set $A$ is nonempty. Let $a=\inf A$. We show that $a\in A$. 
Fix $x^*\in S_{X^*}$, $\alpha>0$ and $\varepsilon>0$ such that $x\in S(B_X,x^*,\alpha)$. Let $\gamma>0$ be such that $x^*(x)>1-(\alpha-\gamma)$ and let $k=a\alpha/(\alpha-\gamma)$. Then $k>a$ and therefore $k\in A$. Since $x\in S(B_X,x^*,\alpha-\gamma)$, there exists $u\in S(B_X,x^*,k(\alpha-\gamma))=S(B_X,x^*,a\alpha)$ such that  $\|x-u\|\ge 2-\varepsilon$. From that we get $a\in A$. Analogically we can show that if $B$ is nonempty, then $b=\inf B\in B$.

It is not hard to see that neither $A$ nor $B$ can be empty. Indeed, if $A=\emptyset$ (the case $B=\emptyset$ is analogical), then by assumption $(1,\infty)\subset B$. However, according to the previous argumentation we now get that $1\in B$, i.e. $y$ is a $\Delta$-point, which is a contradiction.
Therefore, $A=[a,\infty)$ and $B=[b,\infty)$. From the assumption we can easily see that $1/a+1/b\ge 1$, hence there exist $p,q>1$ that satisfy $1/p+1/q=1$ such that $p\in A$ and $q\in B$.
\end{proof}
%
%
%
%
%
%
%
%
%
%
%
%
%
%
%
%
%
%
%
\bibliographystyle{amsplain}
\footnotesize

\end{document}